
\documentclass[a4paper,11pt]{amsart}

\usepackage{latexsym}
\usepackage{amssymb,amsmath}
\usepackage{graphics}
\usepackage{url}
\usepackage[vcentermath]{youngtab}
  
\usepackage{booktabs}  
\usepackage{caption}
\usepackage{tikz}
\usetikzlibrary{snakes}
\usetikzlibrary{arrows}
\tikzstyle{empty}=[circle,draw=black!80,thick]
\tikzstyle{emptyn}=[circle,draw=black!80,fill=white,scale=0.5] 
\tikzstyle{nero}=[circle,draw=black!80,fill=black!80,thick]

\newtheorem{teo}{Theorem}[section]
\newtheorem{corollary}[teo]{Corollary}

\newtheorem{proposition}[teo]{Proposition}

\newtheorem{lemma}[teo]{Lemma}

\theoremstyle{definition}
\newtheorem{definition}[teo]{Definition}
\newtheorem{remark}[teo]{Remark}
\newtheorem{example}[teo]{Example}

\newcommand{\N}{\mathbf{N}}
\newcommand{\Z}{\mathbf{Z}}



\renewcommand{\epsilon}{\varepsilon}


\DeclareMathOperator{\Ext}{Ext}

\DeclareMathOperator{\soc}{soc}
\renewcommand{\top}{\mathrm{top}\ }


\linespread{1.15}



\newcounter{thmlistcnt}
\newenvironment{thmlist}%
	{\setcounter{thmlistcnt}{0}%
	\begin{list}{\emph{(\roman{thmlistcnt})}}{%
		\usecounter{thmlistcnt}%
		\setlength{\topsep}{0pt}%
		\setlength{\leftmargin}{27pt}%
		\setlength{\itemsep}{0pt}%
		\setlength{\labelwidth}{20pt}
		\setlength{\itemindent}{0pt}}%
	}%
	{\end{list}}%

\captionsetup{width=4.5in}

\newcommand{\ob}[1]{\langle#1\rangle}
\newcommand{\obb}[1]{\mathbf{\ob{#1}}}
\newcommand{\tb}[2]{\langle#1,#2\rangle}
\newcommand{\tbb}[2]{\mathbf{\tb{#1}{#2}}}

\newcommand{\m}{\!-\!}
\newcommand{\p}{\!+\!}

\title[Indecomposable summands of Foulkes modules]{Indecomposable summands of Foulkes modules}
\author{Eugenio Giannelli}
\address[E.~Giannelli]{Department of Mathematics, University of Kaiserslautern,
P.O. Box 3049, 67655 Kaiserslautern, Germany}
\email{gianelli@mathematik.uni-kl.de}

\author{Mark Wildon}
\address[M.~Wildon]{Department of Mathematics, Royal Holloway, University of London, United Kingdom}
\email{mark.wildon@rhul.ac.uk}

\thanks{The first author is supported by the London Mathematical Society Postdoctoral Mobility Grant PMG14-15 02.}

\linespread{1.2}
\subjclass[2010]{Primary 20C30. Secondary 05A17, 20C20. }

\begin{document}

\begin{abstract}
In this paper we study the modular structure of the permutation module $H^{(2^n)}$ of the symmetric
group $S_{2n}$ acting on set partitions of a set of size $2n$ into $n$ sets each of size $2$,
defined over a field of odd characteristic $p$. In particular we  
characterize the vertices of the indecomposable summands of $H^{(2^n)}$ 
and fully describe all of
its indecomposable summands that lie in blocks of $p$-weight at most two.
When $2n < 3p$ we show that there is a unique summand of $H^{(2^n)}$ in the principal
block of $S_{2n}$ and that this summand
exhibits many of the extensions between simple modules in its block.
\end{abstract}

\maketitle
\thispagestyle{empty}

\section{Introduction}

The symmetric group $S_{2n}$ acts on the 
collection of all set partitions of $\{1,\ldots, 2n\}$ 
into~$n$ sets each of size two. Let $H^{(2^n)}$ denote the corresponding permutation
representation of $S_{2n}$, defined over a field $F$ of odd prime characteristic $p$.
(Equivalently, $H^{(2^n)}$ is the $FS_{2n}$-module induced from the trivial representation
of the imprimitive wreath product $S_2 \wr S_n \le S_{2n}$.)
We call $H^{(2^n)}$ a \emph{Foulkes module}.  
In the main theorem of \cite{GiannelliWildonDec} the authors used results on
the indecomposable summands of Foulkes modules to determine certain
decomposition numbers of the symmetric group. In this paper we study the
structure of Foulkes modules more closely. In particular, we characterize the
vertices of the indecomposable summands of each $H^{(2^n)}$ and 
give a precise description of all summands in blocks of $p$-weight at most two.

Let $Q_t$ be a Sylow $p$-subgroup of $S_2 \wr S_{tp}$ and let $Q_0$ be the trivial group.
By \cite[Theorem 1.2]{GiannelliWildonDec}, if $U$ is an indecomposable summand of $H^{(2^n)}$
then $U$ has vertex $Q_t$ for some $t \in \N_0$. Our first main theorem gives the converse.

\begin{teo}\label{teo:vertices}
Let $n \in \N$. 
For all $t \in \N_0$ such that $t \le n/p$ there is an indecomposable summand of
$H^{(2^n)}$ with vertex $Q_t$.
\end{teo}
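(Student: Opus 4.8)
The plan is to use the theory of $p$-permutation (trivial source) modules. Since $H^{(2^n)}$ is a permutation module it is a $p$-permutation module, so by Broué's theorem on the Brauer correspondence for trivial source modules, the indecomposable summands of $H^{(2^n)}$ with vertex $Q_t$ correspond to the indecomposable projective summands of its Brauer construction $H^{(2^n)}(Q_t)$, regarded as a module for $\overline{N} := N_{S_{2n}}(Q_t)/Q_t$. Thus it suffices to show that $H^{(2^n)}(Q_t)$ has a nonzero projective direct summand as an $\overline{N}$-module. I realise $Q_t$ concretely as a Sylow $p$-subgroup of $S_2 \wr S_{tp}$ acting on the first $2tp$ points (permuting the $tp$ natural pairs through a Sylow $p$-subgroup of $S_{tp}$) and fixing the remaining $2(n-tp)$ points; since $tp \equiv 0 \pmod p$, this $Q_t$ has full support on the first $2tp$ points.

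First I would compute the Brauer construction, whose basis is the set of $Q_t$-fixed set partitions. A short argument shows that a $Q_t$-fixed partition can contain no pair meeting both the support and the fixed-point set of $Q_t$: such a pair would be carried to a distinct pair sharing its fixed point, contradicting that the parts are disjoint. Hence every $Q_t$-fixed partition splits as a $Q_t$-invariant matching of the $2tp$ moved points together with an arbitrary matching of the $2(n-tp)$ fixed points. Because $Q_t$ has full support on the first $2tp$ points, $N_{S_{2n}}(Q_t) = N_{S_{2tp}}(Q_t) \times S_{2(n-tp)}$, and this splitting is an isomorphism of $\overline{N}$-modules
\[
H^{(2^n)}(Q_t) \;\cong\; B_t \boxtimes H^{(2^{\,n-tp})},
\]
where $B_t$ is the span of the $Q_t$-invariant matchings of the moved points, viewed as a module for $N_{S_{2tp}}(Q_t)/Q_t$. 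Since (over a splitting field) a box product has a projective summand exactly when both tensor factors do, the theorem reduces to: (a) $H^{(2^m)}$ has a projective summand for every $m \ge 0$; and (b) $B_t$ has a projective summand over $N_{S_{2tp}}(Q_t)/Q_t$. Note that (a) for $m=n$ is exactly the case $t=0$ of the theorem, while for $t \ge 1$ it is applied with the smaller value $m = n - tp$.

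Statement (b) I expect to be the easy half. The key point is that the natural matching (the $tp$ given pairs) is $Q_t$-invariant, and its stabiliser in $S_{2tp}$ is the pair-preserving subgroup $S_2 \wr S_{tp}$, of which $Q_t$ is a Sylow $p$-subgroup. Therefore the stabiliser of the natural matching in $N_{S_{2tp}}(Q_t)$ is $N_{S_2 \wr S_{tp}}(Q_t)$, whose image in $\overline{N}$ is $N_{S_2 \wr S_{tp}}(Q_t)/Q_t$, a $p'$-group (the normaliser of a Sylow $p$-subgroup modulo that subgroup). The $\overline{N}$-orbit of the natural matching is then a direct summand of the permutation module $B_t$ isomorphic to the trivial module induced from a $p'$-subgroup, which is projective. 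This settles (b) for every $t$.

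The main obstacle is (a): that every Foulkes module $H^{(2^m)}$ possesses a projective direct summand. This does not follow from the cheap orbit argument used for (b), since when $m \ge p$ the stabiliser $S_2 \wr S_m$ of any single matching has order divisible by $p$, so no orbit summand of $H^{(2^m)}$ is itself projective and the projective summand must arise more subtly. My plan is to split into cases according to the $p$-modular block structure. When some even partition $2\lambda$ of $2m$ is a $p$-core, the Specht module $S^{2\lambda}$ lies in a block of defect zero and occurs with multiplicity one in the Foulkes character, so the corresponding block component of $H^{(2^m)}$ is a nonzero projective summand, giving (a) at once; the smallest cases $m < p$, where $H^{(2^m)}$ is wholly projective, serve as a base. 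The remaining values of $m$—those for which $H^{(2^m)}$ has no defect-zero constituent—are comparatively few and are supported on blocks of small $p$-weight; for these I would locate a projective summand directly using the finer description of the summands in blocks of weight at most two developed later in this paper, together with induction on $m$. Controlling (a) uniformly across all $m$, rather than block-by-block, is where I expect the real difficulty to lie.
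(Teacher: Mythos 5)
Your reduction is sound and matches the paper's in substance: the paper's Proposition~\ref{prop:Green} (quoting the Brauer/Green correspondence machinery of \cite{GiannelliWildonDec}) performs exactly the reduction you describe, namely that it suffices to show every Foulkes module $H^{(2^m)}$ has an indecomposable projective summand, and your orbit argument for the factor $B_t$ is fine (indeed it is not even needed, since the cited tensor factorization already has projective first factor). The genuine gap is in your part (a), which is where the entire content of the theorem lies, and your proposed strategy for it does not work. First, the values of $m$ for which $2m$ has no even $p$-core partition are not ``comparatively few'': even $p$-cores are very sparse (for $p=3$ they are essentially the doubled staircases, of sizes $k(k+1)$ only). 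Second, and more seriously, the exceptional projective summands are \emph{not} confined to blocks of $p$-weight at most two, so Theorem~\ref{teo:lowweight} cannot rescue you. What is actually needed is, for every $m$, an even partition $2\lambda$ of $2m$ whose $p$-weight equals $w(\gamma)$ for its own $p$-core $\gamma$ (i.e.\ $2\lambda\in\mathcal{E}(\gamma)$); then Proposition~\ref{prop:proj} produces the projective summand $P^{2\lambda}$. In the paper's construction this block weight grows without bound with $m$ (in Proposition~\ref{prop:core} it is $tt'$ or $t(t'+1)$, of order roughly $\sqrt{m}$ or worse), so no finite list of low-weight cases plus induction on $m$ can close the argument; an induction from $m-p$ to $m$ also changes the ambient block completely and is not set up anywhere in your sketch.

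The paper devotes Lemma~\ref{lemma:num}, the explicit family $\lambda_{k,\ell}$ in \S 5, and the abacus results of \S 3 (Propositions~\ref{prop:beadsfromabove} and~\ref{prop:core}, which compute $w(\gamma)$ for two-runner cores) precisely to exhibiting such a $2\lambda$ for every $m$. That uniform construction is the real work of the proof, and it is absent from your proposal.
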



%
%

To state the second main theorem we need some more definitions from~\cite{GiannelliWildonDec}.
When
defined over a field
of characteristic zero, $H^{(2^n)}$ has 
ordinary character $\sum \chi^{2\lambda}$
where the sum is over all partitions $\lambda$ of $n$, and $2\lambda$ is the 
partition obtained from~$\lambda$ by doubling each part. (For an elegant proof of 
this fact with minimal prerequisites, see \cite{IRS}.) 
We say that such partitions are \emph{even}.
Given a $p$-core $\gamma$, let $w(\gamma)$ be the minimum number of $p$-hooks
that, when added to~$\gamma$, give an even partition.
Let $\mathcal{E}(\gamma)$
be the set of even partitions that can be obtained by adding $w(\gamma)$ $p$-hooks
to $\gamma$. For example, if $p=3$, then $w\bigl( (3,1) \bigr) = 2$ and $\mathcal{E}\bigl( (3,1)
\bigr) = \bigl\{ (6,4), (6,2,2), (4,4,2) \bigr\}$.
Let $B(\gamma,w)$
denote the $p$-block of $S_n$ with $p$-core $\gamma$ and $p$-weight $w$, where $n = |\gamma| + wp$.
As a convenient shorthand, we write $\nu \in B(\gamma,w)$ to mean that the partition $\nu$ has $p$-core
$\gamma$ and $p$-weight $w$. Let $S^\mu$ denote the Specht module labelled by the partition $\mu$.
For $\mu$ a $p$-regular partition,
 let $P^\mu$ 
denote the projective cover of the simple $FS_n$-module $D^\mu$, defined in
\cite[Corollary 12.2]{James} as the unique top composition factor of $S^\mu$.
Finally let $d_{\mu\nu} = [S^\mu : D^\nu]$.

\begin{teo}\label{teo:lowweight}
Let $n \in \N$. Let $\gamma$ be a $p$-core.
\begin{thmlist}
\item There is a summand of $H^{(2^n)}$ in
$B(\gamma,0)$ if and only if $\gamma$ is even. In this case the unique summand
is the simple projective Specht module~$S^\gamma$.
\item There is a summand of $H^{(2^n)}$ in $B(\gamma,1)$ if and only if 
$w(\gamma) = 1$. In this case $\mathcal{E}(\gamma) = \{2\lambda, 2\lambda'\}$
for partitions $\lambda$, $\lambda'$ with $\lambda \lhd \lambda'$,
and the unique summand is $P^{2\lambda'}$.
\item There is a summand of $H^{(2^n)}$ in $B(\gamma,2)$ if and only if
$w(\gamma) = 0$ or $w(\gamma) = 2$. If $w(\gamma) = 2$ the unique summand is 
$P^{2\lambda}$, where $2\lambda$ is the unique
maximal element of $\mathcal{E}(\gamma)$. If $w(\gamma) = 0$ the unique summand has
vertex $Q_1$ and its Green correspondent is $P \otimes S^\gamma$ as a representation of
$N_{S_{2n}}(Q_1)/Q_1 \cong N_{S_{2p}}(Q_1)/Q_1 \times S_{n-2p}$, where $P$ is
the projective cover of the trivial $FN_{S_{2p}}(Q_1)/Q_1$-module.
\end{thmlist}
\end{teo}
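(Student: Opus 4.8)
\section*{Proof proposal}

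The plan is to combine three inputs. First, over a field of characteristic zero the character of $H^{(2^n)}$ is $\sum_{\lambda\vdash n}\chi^{2\lambda}$, so in each block $B(\gamma,w)$ the ordinary character of $H^{(2^n)}$ is the sum of $\chi^{2\mu}$ over the even partitions $2\mu$ of weight $w$ and core $\gamma$. Second, $H^{(2^n)}$ is a permutation module, so every summand is a $p$-permutation (trivial source) module, and I would use Broué's theorem that the Brauer construction $M\mapsto M(Q)$ gives a bijection between the indecomposable $p$-permutation $FS_{2n}$-modules with vertex $Q$ and the indecomposable projective $F[N_{S_{2n}}(Q)/Q]$-modules, together with Green correspondence. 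Third, by Theorem~\ref{teo:vertices} every summand has vertex some $Q_t$; since the vertex of a summand in $B(\gamma,w)$ lies, up to conjugacy, in a defect group of the block, a Sylow $p$-subgroup of $S_{wp}$, a comparison of cycle types shows that for $w\le 1$ only $Q_0$ can occur, while for $w=2$ only $Q_0$ and $Q_1$ can occur (the group $Q_2$ moves $4p$ points and does not embed in the defect group). Finally I would establish the Brauer-quotient factorisation
\[ H^{(2^n)}(Q_1)\;\cong\;H^{(2^p)}(Q_1)\otimes H^{(2^{n-p})} \]
as a module for $N_{S_{2p}}(Q_1)/Q_1\times S_{2n-2p}$, obtained by observing that a pair-partition fixed by $Q_1$ has no pair meeting both the $2p$ points moved by $Q_1$ and the remaining points, so it splits as a $Q_1$-invariant pairing of the former and an arbitrary pairing of the latter.

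For part (i) the block $B(\gamma,0)$ has defect zero, so its unique indecomposable is the simple projective Specht module $S^\gamma=D^\gamma$; a summand occurs precisely when $\chi^\gamma$ is a constituent of the character of $H^{(2^n)}$, that is when $\gamma$ is even, and then with multiplicity one. For part (ii) the vertex analysis shows every summand in $B(\gamma,1)$ is projective, and the ``if and only if'' reduces to the combinatorial fact that an even partition of weight $1$ and core $\gamma$ exists exactly when $w(\gamma)=1$. Assuming $w(\gamma)=1$, I would prove by an abacus computation on the beta-set description of even partitions that exactly two of the $p$ partitions in the block are even, that they are consecutive in the dominance order along the linear Brauer tree of the weight-$1$ block, and that the more dominant one, $2\lambda'$, is $p$-regular and labels the edge joining them. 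Hence $P^{2\lambda'}$ has ordinary character $\chi^{2\lambda}+\chi^{2\lambda'}$, which is exactly the block component of the character of $H^{(2^n)}$; since that component is projective and, in a weight-$1$ block, every projective character is a sum of characters of adjacent pairs of vertices, the only possibility is the single module $P^{2\lambda'}$.

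Part (iii) is the heart of the matter, and the ``only if'' follows from the existence criteria below: an even partition of weight $2$ exists only if $w(\gamma)\in\{0,2\}$, and a vertex-$Q_1$ summand forces $\gamma$ even through the factorisation. When $w(\gamma)=2$ the core $\gamma$ is not even, so the factorisation together with part (i) applied to $H^{(2^{n-p})}$ shows that $H^{(2^n)}(Q_1)$ has no projective summand in the relevant block; hence there is no vertex-$Q_1$ summand and the block component is projective. I would then identify it as $P^{2\lambda}$, where $2\lambda=\max\mathcal{E}(\gamma)$, by showing that $d_{\nu,2\lambda}=1$ exactly for $\nu\in\mathcal{E}(\gamma)$, so that the character of $P^{2\lambda}$ equals $\sum_{2\mu\in\mathcal{E}(\gamma)}\chi^{2\mu}$; this decomposition-number statement I would derive from the known structure of weight-$2$ blocks and from \cite{GiannelliWildonDec}. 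When $w(\gamma)=0$ the core $\gamma$ is even, and the factorisation with part (i) gives that the projective summands of $H^{(2^n)}(Q_1)$ in the block corresponding to $B(\gamma,2)$ are $A\otimes S^\gamma$, where $A$ runs over the projective summands of $H^{(2^p)}(Q_1)$ in the principal block of $N_{S_{2p}}(Q_1)/Q_1$; a local computation, essentially the case $n=p$, shows that the only such $A$ is the projective cover $P$ of the trivial module, with multiplicity one. By Broué's bijection $H^{(2^n)}$ then has a unique summand of vertex $Q_1$ in $B(\gamma,2)$, with Green correspondent $P\otimes S^\gamma$; comparing its ordinary character with the block component of the character of $H^{(2^n)}$ shows that there is no room for an additional projective summand, so this is the only summand.

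I expect the main obstacle to be the weight-$2$ analysis. For $w(\gamma)=0$ the delicate points are the explicit Brauer-quotient factorisation and the local identification of $P$ as the unique principal-block projective summand of $H^{(2^p)}(Q_1)$, followed by the character comparison that excludes any projective summand. For $w(\gamma)=2$ the crux is the decomposition-number input that $P^{2\lambda}$ realises exactly the characters indexed by $\mathcal{E}(\gamma)$; here I would lean on the combinatorics of even partitions in weight-$2$ blocks and on the results of \cite{GiannelliWildonDec}.
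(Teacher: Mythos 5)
Your treatment of parts (i) and (ii) and of the $w(\gamma)=2$ case of (iii) is essentially sound, though it diverges from the paper in how uniqueness is forced. The paper does not compute the column of $P^{2\lambda}$ explicitly: it uses the counting argument of Proposition~\ref{prop:Cartan} (every projective indecomposable in a weight-$w$ block has at least $w+1$ ordinary constituents, by Richards and Bessenrodt--Uno), together with the abacus computation that $|\mathcal{E}(\gamma)|\le 4<2(w+1)$ when $w(\gamma)=2$, and multiplicity-freeness of the ordinary character. Your alternative --- proving $d_{\nu,2\lambda}=1$ exactly for $\nu\in\mathcal{E}(\gamma)$ --- is true and would work, but it is a genuinely harder computation with Richards' theorem that you have only asserted; the counting route avoids it entirely. (Also a small misattribution: the fact that every summand has vertex some $Q_t$ is \cite[Theorem~1.2]{GiannelliWildonDec}, not Theorem~\ref{teo:vertices}, which is the converse.)

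The genuine gap is in the $w(\gamma)=0$ case of (iii), at the step ``comparing its ordinary character with the block component of the character of $H^{(2^n)}$ shows that there is no room for an additional projective summand.'' You have not computed the ordinary character of the vertex-$Q_1$ summand, and nothing in your outline gives it: the Brauer quotient determines the Green correspondent, not the ordinary character of the global summand. The block component of the character of $H^{(2^n)}$ in $B(\gamma,2)$ is $\sum\chi^{2\mu}$ over the (roughly $p/2$ many) even partitions $2\mu$ in the block, so there is plenty of ``room'' for an extra projective $P^{2\nu}$ whose character is a sum of three or more of these $\chi^{2\mu}$; ruling this out is exactly the hard point. (Computing the character of the $Q_1$-summand amounts to the work of Theorem~\ref{teo:Scott}, which the paper only carries out for $\gamma=(2k)$ with $2k<p$.) The paper's argument is different and is the idea you are missing: if $P^\nu$ were a projective summand then $\nu$ is even, so $\delta(\nu)=0$ by Lemma~\ref{lemma:delta0}(ii) (here the hypothesis that $\gamma$ is even is essential); since the column of $\nu$ has at least three nonzero entries and only $\nu$ and $\nu^\circ$ can have $\delta=0$, Richards' Theorem~\ref{teo:Richards} forces a constituent $\chi^\mu$ with $\delta(\mu)=1$; such a $\mu$ is not even, contradicting the fact that every ordinary constituent of a summand of $H^{(2^n)}$ is an even partition. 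You need this $\delta$-based obstruction (or an equivalent) to close the argument.
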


If $t = \lfloor n/p\rfloor$ then $Q_t$ is a Sylow $p$-subgroup of $S_2 \wr S_n$.
The projective cover of the trivial representation of $N_{S_{2n}}(Q_t)/Q_t$ has
vertex $Q_t$ as a representation of $N_{S_{2n}}(Q_t)$; its Green correspondent
is a summand of $H^{(2^n)}$ with vertex $Q_t$ lying in the principal block of $S_{2n}$. 
This summand is an example of a Scott module: see \cite{GreenScott} or \cite{BroueBrauer} for
their definition and basic properties. Our third main theorem describes these summands when $2n < 3p$.

\begin{teo}\label{teo:Scott}
Let $2k < p$ and let $2n = 2p + 2k$.
There is a unique summand~$U$ of $H^{(2^n)}$ 
in the principal block of $S_{2n}$. This summand is the Scott module with vertex~$Q_1$.
The module $U$ has three Loewy layers and
\[ \soc U \cong \top U \cong \bigoplus_{2 \nu \in B( (2k), 2) \atop 2\nu\not= (2k,2^p)} D^{2\nu}. \]
The Loewy layers of $U$ are shown in Figure~1 overleaf, where each edge shows
an element of $\Ext^1$ exhibited by this module.
\end{teo}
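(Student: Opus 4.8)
The plan is to feed the local data from Theorem~\ref{teo:lowweight}(iii) into the well-developed representation theory of weight-two blocks of symmetric groups. First I would identify the block: since $2n=2p+2k$ with $0\le 2k<p$, the one-row partition $(2n)$ has $p$-core $(2k)$ and $p$-weight $2$, so the principal block of $S_{2n}$ is $B\bigl((2k),2\bigr)$. As $(2k)$ is even, $w\bigl((2k)\bigr)=0$ and Theorem~\ref{teo:lowweight}(iii) gives a unique summand $U$ of $H^{(2^n)}$ in this block, with vertex $Q_1$ and Green correspondent $P\otimes S^{(2k)}$ as a module for $N_{S_{2n}}(Q_1)/Q_1\cong L\times S_{2k}$, where $L=N_{S_{2p}}(Q_1)/Q_1$, the factor $S_{2k}$ acts on the remaining $2k$ points, and $P$ is the projective cover of the trivial $FL$-module. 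Because $2k<p$ the algebra $FS_{2k}$ is semisimple and $S^{(2k)}$ is simple projective, so the Loewy structure of the Green correspondent coincides with that of $P$.

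Next I would analyse $P$. The group $L$ has order $2p(p-1)$ with cyclic Sylow $p$-subgroup of order $p$ (the image modulo $Q_1$ of a Sylow $p$-subgroup of $S_{2p}$), so its principal block has cyclic defect and is a Brauer-tree algebra; determining this tree pins down $P$ and shows that it has three Loewy layers, giving the expected local model for $U$. For the self-duality, $H^{(2^n)}$ is a permutation module and hence self-dual, and as $U$ is the unique summand in the self-dual principal block we get $U\cong U^{*}$; since each $D^{\mu}$ is self-dual this yields $\soc U\cong\top U$. It therefore suffices to compute $\top U$ together with the multiset of composition factors, the socle and the overall symmetry then being forced.

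For the composition factors I would use that $U$, being the principal-block component of the permutation module $H^{(2^n)}$, inherits a Specht filtration whose factors are the Specht modules $S^{2\lambda}$ with $2\lambda$ an even partition in $B\bigl((2k),2\bigr)$; a short abacus computation enumerates these even partitions and shows that all of them are $p$-regular except for $(2k,2^{p})$. Running the corresponding characters through the decomposition matrix of the weight-two block, whose entries are all $0$ or $1$ for odd $p$ by Richards, then produces $[U]$. The head is identified as $\bigoplus_{2\nu\neq(2k,2^{p})}D^{2\nu}$: each even $p$-regular Specht module $S^{2\nu}$ has head $D^{2\nu}$, these simples survive to $\top U$, and the singular factor $S^{(2k,2^{p})}$ contributes no further head constituent. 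Self-duality then returns the socle, and the remaining factors make up the heart $\rad U/\soc U$.

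The hard part will be the final step: proving that the heart is semisimple, so that the Loewy length is exactly three, and that the extensions occurring are precisely those of Figure~1. Green correspondence does not by itself transport Loewy length, so I would argue directly inside the block, using the explicit Ext-quiver and the known Loewy structure of the projective modules of $B\bigl((2k),2\bigr)$ to check that no heart constituent has a self-extension or extends another heart constituent within $U$, forcing $\rad^{2}U=\soc U$. Each edge drawn would then be verified to give a genuinely nonzero class in $\Ext^{1}$ realised by $U$, most cleanly by intersecting $U$ with the appropriate Specht and dual-Specht submodules and reading off the two-step subquotients; matching these against the Brauer-tree data transported from $P$ confirms simultaneously that no further extensions occur, completing the description of the three Loewy layers and the edges of Figure~1.
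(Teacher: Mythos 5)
Your overall strategy --- uniqueness from Theorem~\ref{teo:lowweight}(iii), self-duality, composition factors from Richards' description of the weight-two block, and the Ext-quiver to control the Loewy structure --- is essentially the paper's, but one step is genuinely invalid and the hardest step is left as a plan. The invalid step is your identification of $\top U$: from a Specht filtration of $U$ you cannot conclude that the heads $D^{2\nu}$ of the filtration factors ``survive to $\top U$''. Any projective indecomposable $P^{2\lambda}$ occurring as a summand of $H^{(2^n)}$ in another block also has a Specht filtration with several factors, yet its head is the single simple $D^{2\lambda}$, so this inference fails in general. The paper gets the top and socle indirectly: Richards' theorem, applied via the chains of Lemma~\ref{lemma:chains}, shows that each simple $D^\nu$ with $\delta(\nu)=1$ occurs exactly once in $U$ and each $D^{2\lambda}$ with $2\lambda$ even and $p$-regular exactly twice; self-duality plus indecomposability then forces every multiplicity-one factor out of $\top U$ and $\soc U$, and Martin's Ext-quiver (Proposition~\ref{prop:Scott}(iv),(v)) shows that the even-labelled simples extend only the $\delta=1$ simples and vice versa. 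Together these force exactly three Loewy layers with the stated top, heart and socle, with no direct computation of $\top U$ from the filtration.

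The second issue is that the combinatorial input making all of this executable is absent from your outline. What you call ``a short abacus computation'' and ``running the characters through the decomposition matrix'' in fact rests on the key structural observation of Lemma~\ref{lemma:chains}: the even partitions in $B\bigl((2k),2\bigr)$ together with the partitions $\nu$ satisfying $\delta(\nu)=1$ form a single chain under the dominance order, alternating in exactly the pattern needed to read off from Theorem~\ref{teo:Richards} that each even Specht module has precisely the three composition factors $D^{\mu'}$, $D^{\nu}$, $D^{\mu}$ given by its neighbours in the chain. Without this you cannot verify the multiplicity counts above, nor match the edges of Figure~1 to nonsplit extensions inside $U$. Finally, your detour through the Brauer tree of $N_{S_{2p}}(Q_1)/Q_1$ is, as you yourself concede, inconclusive (Green correspondence does not preserve Loewy length), and the paper's proof makes no use of it.
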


\begin{figure}[h]
\begin{center}
\makebox[\textwidth]{
\begin{tikzpicture}[scale=1, x=1cm, y=1cm]
\tikzstyle{nero} = [draw, circle, minimum size=4pt, inner sep=0mm, fill]

\renewcommand{\scriptstyle}[1]{\scalebox{0.9}{$#1$}}
\def\x{6.5}\def\y{13.0}\def\v{0}
\newcommand{\s}[1]{\scalebox{0.9}{#1}}

\foreach \w in {0,\x,\y} {
\draw (0+\w,0)--(2+\w,-2);
\draw (0+\w,-2)--(2+\w,0);
}

\draw (2,0)--(3.5,-1.5);
\draw (2,-2)--(3.5,-0.5);

\draw (\x,0)--(\x-1.5,-1.5);
\draw (\x,-2)--(\x-1.5,-0.5);
\draw (\x+2,0)--(\x+3.5,-1.5);
\draw (\x+2,-2)--(\x+3.5,-0.5);

\draw (\y+2,0)--(\y+3,-1);
\draw (\y+2,-2)--(\y+3,-1);
\draw (\y,0)--(\y-1.5,-1.5);
\draw (\y,-2)--(\y-1.5,-0.5);

\foreach \z in {0,-2} {
\node at (0,\z) [fill=white] {\s{$D^{\ob{2k}}$}};
\node at (2,\z) [fill=white] {\s{$D^{\tb{p-2}{p-1}}$}};
\node at (\x,\z) [fill=white] {\s{$D^{\tb{2k+1}{2k+2}}$}};
\node at (\x+2.5,\z) [fill=white] {\s{$D^{\tb{2k-2}{2k-1}}$}};
\node at (\y,\z) [fill=white] {\s{$D^{\tb{4}{5}}$}};
\node at (\y+2,\z) [fill=white] {\s{$D^{\tb{2}{3}}$}};
\node at (4.25,\z) {$\cdots$};
\node at (10.875,\z) {$\cdots$};
}

\node at (1,-1) [fill=white] {\s{$D^{\tb{2k}{p-2}}$}};
\node at (3,-1) [fill=white] {\s{$D^{\tb{p-4}{p-2}}$}};
\node at (\x+1,-1) [fill=white] {\s{$D^{\tb{2k-2}{2k+1}}$}};
\node at (\x+3,-1) [fill=white] {\s{$D^{\tb{2k-4}{2k-2}}$}};
\node at (\x-1,-1) [fill=white] {\s{$D^{\tb{2k+1}{2k+3}}$}};

\node at (\y-1,-1) [fill=white] {\s{$D^{\tb{4}{6}}$}};
\node at (\y+1,-1) [fill=white] {\s{$D^{\tb{2}{4}}$}};
\node at (\y+3,-1) [fill=white] {\s{$D^{\tb{0}{2}}$}};

\end{tikzpicture}}
\end{center}

\medskip
\caption{\small The three Loewy layers of the unique principal block summand of $H^{(2^n)}$
when $n = 2(p+k)$ and $2 < 2k < p-1$. 
The labels of
simple modules are defined in Section 7. If $2k \in \{0,2,p-1\}$ then the structure
is the same but minor changes must be made to the labels: see Figure 5 in Section 7.}

\end{figure}

Our final theorem counts the summands of $H^{(2^n)}$ which, like the  Scott module summand,
have the largest possible vertex.

\begin{teo}\label{teo:maximalvertex}
Let $t = \lfloor n/p \rfloor$ and let $r = n-tp$. The number of indecomposable
summands of $H^{(2^n)}$ with vertex $Q_t$ is equal to the number of 
$p$-core partitions that can be obtained by removing $p$-hooks from even partitions of~$2r$.
Each such summand lies in a different block of $S_{2n}$.
\end{teo}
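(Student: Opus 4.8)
The plan is to count these summands by means of the Brauer correspondence for $p$-permutation modules. Since $H^{(2^n)}$ is a permutation module it is a $p$-permutation module, so by Broué's theorem \cite{BroueBrauer} the number of indecomposable summands of $H^{(2^n)}$ with vertex $Q_t$, counted with multiplicity, equals the number of indecomposable projective summands of the Brauer quotient $H^{(2^n)}(Q_t)$, regarded as a module for $\bar N := N_{S_{2n}}(Q_t)/Q_t$. Because $t=\lfloor n/p\rfloor$ the group $Q_t$ is a Sylow $p$-subgroup of $S_2\wr S_n$, hence a maximal vertex, so this is exactly the quantity the theorem asks for. The first step is therefore to identify $H^{(2^n)}(Q_t)$ as a $p$-permutation module for $\bar N$, using that the Brauer quotient of a permutation module is the permutation module on the fixed points of the acting $p$-group.

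Next I would analyse the action of $Q_t$ on the set $\Omega$ of perfect matchings of $\{1,\dots,2n\}$. Taking $Q_t\le S_2\wr S_{tp}$ to permute the $tp$ blocks as a Sylow $p$-subgroup of $S_{tp}$, the support of $Q_t$ is $\{1,\dots,2tp\}$, on which $Q_t$ acts without fixed points since $p\mid tp$ forces every block-orbit to have size divisible by $p$, while the remaining $2r$ points are fixed. A $Q_t$-fixed matching can then contain no edge joining a fixed point to a moved point, so it restricts to a $Q_t$-invariant matching of $\{1,\dots,2tp\}$ together with an arbitrary matching of the $2r$ fixed points. Writing $\Omega_1$ for the set of $Q_t$-invariant matchings of $\{1,\dots,2tp\}$ and using $N_{S_{2n}}(Q_t)=N_{S_{2tp}}(Q_t)\times S_{2r}$, this yields $\bar N=\bar N_1\times S_{2r}$ with $\bar N_1=N_{S_{2tp}}(Q_t)/Q_t$, and a factorisation
\[ H^{(2^n)}(Q_t)\cong F[\Omega_1]\otimes H^{(2^r)} \]
of an $F\bar N_1$-module with the Foulkes module $H^{(2^r)}$ for $S_{2r}$ on the fixed points. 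For $t=1$ this recovers the Green correspondent $P\otimes S^\gamma$ appearing in Theorem~\ref{teo:lowweight}(iii), which confirms the shape of the decomposition.

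For an outer tensor product the indecomposable projective summands are precisely the products $P_1\otimes P_2$ of an indecomposable projective summand $P_1$ of the first factor with one, $P_2$, of the second; so the required count is the product of the two separate counts. The key structural input, which I would draw from the analysis used to prove Theorem~\ref{teo:vertices}, is that $F[\Omega_1]$ is the indecomposable projective cover of the trivial $F\bar N_1$-module: the Sylow $p$-subgroup of $\bar N_1$ permutes $\Omega_1$ freely, so $F[\Omega_1]$ is projective, and it is a transitive permutation module with trivial head, hence indecomposable. The first factor thus contributes exactly $1$, and the count reduces to the number of indecomposable projective summands of $H^{(2^r)}$. Since $2r<2p$, every block of $S_{2r}$ has $p$-weight at most one, so by Theorem~\ref{teo:lowweight}(i),(ii) every summand of $H^{(2^r)}$ is projective (a simple projective Specht module in weight $0$, a projective indecomposable $P^{2\lambda'}$ in weight $1$) and there is exactly one summand in each block meeting $H^{(2^r)}$. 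As the character of $H^{(2^r)}$ is $\sum_{\mu\vdash r}\chi^{2\mu}$, the blocks meeting $H^{(2^r)}$ are exactly those whose $p$-core is the $p$-core of an even partition of $2r$, and their number is the number of such cores, which is the claimed count.

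Finally, for the assertion that distinct summands lie in distinct blocks, I would track the block through the correspondence. The summand of $H^{(2^r)}$ in the block with $p$-core $\gamma_2$ corresponds to the projective $F\bar N$-summand $F[\Omega_1]\otimes P_2$, which lies in the block of $\bar N$ whose $S_{2r}$-component has core $\gamma_2$ and whose $\bar N_1$-component is principal; under Brauer correspondence, and since the principal $\bar N_1$-block matches the principal block of $S_{2tp}$ (core $\varnothing$), this determines the block of $S_{2n}$ with $p$-core $\gamma_2$. Because the blocks of $S_{2r}$ meeting $H^{(2^r)}$ have pairwise distinct cores, the associated blocks of $S_{2n}$ are pairwise distinct, giving the final claim. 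The main obstacle is the structural statement that $F[\Omega_1]$ is indecomposable projective over $\bar N_1$ for every $t$; for $t=1$ this is the computation underlying Theorem~\ref{teo:lowweight}(iii), and for general $t$ I expect it to follow from the same Sylow and wreath-product analysis of $N_{S_{2tp}}(Q_t)$ developed in proving Theorem~\ref{teo:vertices}.
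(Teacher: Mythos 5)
Your back end coincides with the paper's: the paper also reduces to counting the indecomposable projective summands of $H^{(2^r)}$, observes via Theorem~\ref{teo:lowweight}(i),(ii) that since $r<p$ every block of $S_{2r}$ meeting $H^{(2^r)}$ has weight at most one and contains exactly one (projective) summand, and then counts blocks by $p$-cores. Where you differ is the front end: the paper gets the reduction in one line from Proposition~\ref{prop:Green}, i.e.\ from the Green correspondence together with the tensor factorization $V\boxtimes W$ of the local module imported from \cite[Theorem 1.2]{GiannelliWildonDec}, whereas you recompute the Brauer quotient $H^{(2^n)}(Q_t)=F[\Fix_\Omega(Q_t)]$ by hand and apply Brou\'e's correspondence directly. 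Your analysis of the fixed points (no edge joins a fixed point to a moved point, so $\Fix_\Omega(Q_t)=\Omega_1\times\Omega_2$ and $H^{(2^n)}(Q_t)\cong F[\Omega_1]\boxtimes H^{(2^r)}$ over $N_{S_{2tp}}(Q_t)/Q_t\times S_{2r}$) is correct and is essentially the computation underlying the cited result; what this buys you is a self-contained argument that does not lean on the earlier paper, at the cost of having to establish the local structure yourself.

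That cost is where the one genuine gap sits, and you have correctly located it but your proposed justification does not work as stated. "A transitive permutation module with trivial head, hence indecomposable" is not a valid inference: a transitive permutation module $F[G/H]$ always has the trivial module as a quotient, but its head can be large (take $H=1$ and $G$ a $p'$-group), so neither transitivity nor the presence of the trivial module in the head forces indecomposability. What you actually need is the quantitative fact that $|\Omega_1|$ equals the order of a Sylow $p$-subgroup of $\bar N_1=N_{S_{2tp}}(Q_t)/Q_t$: granted transitivity and that the point stabilisers are $p'$-groups, $F[\Omega_1]$ is then projective of the minimal possible dimension for a nonzero projective, hence is the projective cover of the trivial module. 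Verifying $|\Omega_1|=|\bar N_1|_p$ for arbitrary $t$ requires a genuine computation with the wreath-product structure of $Q_t$ and $N_{S_{2tp}}(Q_t)$ (for instance, when $p=3$ and $t=3$ one finds $|\Omega_1|=3$ while $|Q_3|=81$, so the count is far from obvious a priori); this is exactly the content the paper outsources to \cite[Theorem 1.2]{GiannelliWildonDec} via Proposition~\ref{prop:Green}, and it is not recoverable from the proof of Theorem~\ref{teo:vertices}, which concerns the existence of projective summands of $H^{(2^{n-tp})}$ rather than the structure of the local factor. If you supply that computation (or simply cite the factorization with $V$ indecomposable projective from the earlier paper), the rest of your argument, including the block-tracking for the final claim, goes through.
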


In particular we note that if $0 \le 2r < p$ then the number of indecomposable
summands of $H^{(2^n)}$ with vertex $Q_t$ is simply the number of even partitions of $2r$.
Another easy corollary of our theorems is the following.

\begin{corollary}\label{cor:indec}
The unique non-trivial indecomposable Foulkes module is $H^{(2^2)}$ when $p=3$.
\end{corollary}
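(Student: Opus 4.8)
The plan is to prove Corollary~\ref{cor:indec} by combining the vertex classification of Theorem~\ref{teo:vertices} with the low-weight structure given in Theorem~\ref{teo:lowweight}, reducing the problem to identifying exactly when $H^{(2^n)}$ is itself indecomposable. First I would observe that if $H^{(2^n)}$ is indecomposable then, since $H^{(2^n)}$ is a transitive permutation module containing the trivial module as a summand of its fixed points, it must contain a summand in the principal block; more concretely, its indecomposable summands are distributed among blocks and an indecomposable module lies in a single block. So I would argue that indecomposability forces $H^{(2^n)}$ to lie entirely in one block, and then count how the ordinary character $\sum_\lambda \chi^{2\lambda}$ distributes across blocks.

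The key mechanism is a character-theoretic lower bound on the number of summands. Over characteristic zero the character of $H^{(2^n)}$ is $\sum_\lambda \chi^{2\lambda}$, summed over all partitions $\lambda$ of $n$, and distinct even partitions $2\lambda$ generally lie in different $p$-blocks by the Nakayama conjecture (they have different $p$-cores). Thus I would count the number of distinct $p$-cores arising from even partitions of $2n$: whenever two even partitions have different $p$-cores, the corresponding ordinary constituents sit in different blocks, and since an indecomposable module lies in a single block, $H^{(2^n)}$ must split. Hence a necessary condition for indecomposability is that all even partitions of $2n$ share a single $p$-core, and moreover lie in a single block of small weight so that the block contributes only one summand (as guaranteed by Theorem~\ref{teo:lowweight}). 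I would then enumerate the very small cases: for $n=1$ the module $H^{(2)}$ is the trivial module of $S_2$, which is genuinely trivial rather than ``non-trivial''; the first interesting case is $n=2$, where $H^{(2^2)}$ has character $\chi^{(4)}+\chi^{(2,2)}$, and for $p=3$ both $(4)$ and $(2,2)$ have the same $3$-core (namely $(1)$) and weight one, so by Theorem~\ref{teo:lowweight}(ii) there is a unique summand, forcing indecomposability.

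The substance is then to show that $n=2,\ p=3$ is the \emph{only} non-trivial indecomposable case. I would split into the cases governed by $t = \lfloor n/p\rfloor$ and the vertex list of Theorem~\ref{teo:vertices}. For $n \ge p$ we already have at least two distinct vertices $Q_0,\dots,Q_{\lfloor n/p\rfloor}$ occurring among the summands (by Theorem~\ref{teo:vertices}, every $Q_t$ with $t\le n/p$ is realized), and summands with different vertices are non-isomorphic and in particular distinct, so $H^{(2^n)}$ has at least two summands and is decomposable. For $n < p$ the whole module lies in defect-zero blocks (weight $0$), so by Theorem~\ref{teo:lowweight}(i) each even partition of $2n$ gives its own simple projective Specht-module summand, and $H^{(2^n)}$ is decomposable as soon as there is more than one even partition of $2n$, i.e.\ as soon as $n \ge 2$ with $p$ large. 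The remaining delicate range is $n = 2$ with $p > 3$ and small values where $t=0$ but a single block might still occur; here I would directly check that the two even partitions $(4)$ and $(2,2)$ of $4$ have distinct $p$-cores whenever $p \ne 3$, producing two summands.

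The main obstacle I anticipate is the careful bookkeeping at the boundary, separating the genuinely trivial module $H^{(2)}$ (excluded by the word ``non-trivial'') from the non-trivial indecomposable $H^{(2^2)}$, and verifying that no sporadic coincidence of $p$-cores for larger $n$ and $p$ yields a second indecomposable example. Concretely, one must rule out the possibility that for some $n\ge 3$ all even partitions of $2n$ lie in a single weight-one or weight-two block \emph{and} that block contributes a single summand; Theorem~\ref{teo:lowweight} and Theorem~\ref{teo:vertices} together should close this gap, since for $n\ge p$ multiple vertices already force a split, and for $2 < n < p$ the number of even partitions of $2n$ exceeds one, forcing multiple defect-zero summands. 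Thus the crux is a clean counting argument showing that only $(n,p)=(2,3)$ survives all these obstructions.
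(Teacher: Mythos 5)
Your argument for $n \ge p$ (two distinct vertices forced by Theorem~\ref{teo:vertices}) and your treatment of $n=2$ (same $3$-core $(1)$ when $p=3$, distinct $p$-cores when $p>3$) coincide with the paper's proof and are fine. The gap is in the range $3 \le n < p$: you assert that ``the whole module lies in defect-zero blocks'' and that more than one even partition therefore forces multiple simple projective summands. This is false whenever $p \le 2n$. In that range an even partition of $2n$ can have $p$-weight $1$ (for example $(4)$ has $3$-core $(1)$ and weight $1$, and for $n=4$, $p=5$ the partition $(8)$ has $5$-core $(3)$), and two even partitions sharing a $p$-core lie in a single weight-one block which, by Theorem~\ref{teo:lowweight}(ii), contributes only the \emph{one} summand $P^{2\lambda'}$. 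So counting even partitions does not by itself produce a second summand; you must show that the even partitions of $2n$ occupy at least two distinct blocks, and your proposal contains no argument for this when $p/2 \le n < p$.

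The paper closes exactly this case by exhibiting, for $4 \le n < p$, the three even partitions $(2n)$, $(2n-2,2)$, $(2n-4,4)$ and observing they cannot all lie in one block (a weight-zero block contains one partition, and a weight-one block contains at most two even partitions since $\mathcal{E}(\gamma) = \{2\lambda, 2\lambda'\}$ when $w(\gamma)=1$), then checking $n=3$ separately for $p=3$, $p=5$ and $p>5$. A clean patch for your write-up would be: for $3 \le n < p$ every block meeting $H^{(2^n)}$ has $p$-weight at most $1$ (as $2n < 2p$), each such block contains at most two even partitions, and there are at least three even partitions of $2n$ once $n \ge 3$; hence the ordinary character spreads over at least two blocks and the module decomposes. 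Without some argument of this kind your proof does not rule out a second indecomposable Foulkes module in the range $p/2 \le n < p$.
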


\begin{proof}
By Theorem~\ref{teo:vertices} there are summands of $H^{(2^n)}$ with vertices $Q_0$ and $Q_1$
whenever $n \ge p$. When $4 \le n < p$  the partitions $(2n)$, $(2n-2,2)$
and $(2n-4,4)$ are not all in the same block, so $H^{(2^n)}$ has summands in two
different blocks.
By Theorem~\ref{teo:lowweight},
if $p=3$ then $H^{(2^2)}$ is indecomposable; if $p > 3$ then
 $H^{(2^2)} = S^{(4)} \oplus S^{(2,2)}$. If $p=3$ 
then $H^{(2^3)}$ has $S^{(4,2)}$ as the unique summand in its block;
if $p=5$ then
$H^{(2^3)}$ has $S^{(2,2,2)}$ as the unique summand in its block;
if $p > 5$ then $H^{(2^3)} = S^{(6)} \oplus S^{(4,2)} \oplus S^{(2,2,2)}$.
\end{proof}


The
results in this paper 
show that the behaviour of Foulkes modules 
is very different from the  better
studied Young permutation modules  (see, for instance, 
\cite{ErdmannYoung}, \cite{CGill}, \cite{Henke} and \cite{JamesYoung}). In particular,
by Theorem~\ref{teo:lowweight}, 
each Foulkes module has at most one summand in each block
of weight at most two. The Young permutation modules $M^{(r-s,s)}$
also have multiplicity-free ordinary characters,
but it follows easily from \cite[Theorem~3.3]{Henke} that
if $p$ is odd, $0 \le c < p$ and $p \le j \le p + c/2$ then
 $M^{(2p+c-j,j)}$ always has two summands, namely the Young modules
$Y^{(2p+c)}$ and $Y^{(p+c,p)}$, in the principal block of $S_{2p+c}$.
 By Corollary~\ref{cor:indec} there is only one non-trivial indecomposable Foulkes module,
  whereas $M^{(p-1,1)}$ is always
indecomposable. (The indecomposable Young permutation modules
 are classified in \cite[Theorem~2]{CGill}.)
It would be interesting to know the decomposition
of further permutation modules for symmetric groups in prime characteristic.

\subsection*{Outline}
In \S 2 we give the prerequisite material from \cite{GiannelliWildonDec}.
In \S 3 we find 
$w(\gamma)$ for a special class of $p$-cores $\gamma$. This result is used in the proof
of Theorem~\ref{teo:vertices}.
In \S 4 we recall Richards' results \cite{Richards}
on decomposition numbers in blocks of $p$-weight~$2$, 
and show that even partitions of $p$-weight~$2$ have
a surprisingly simple characterization using his definitions. These results
are used in the proof of Theorem~\ref{teo:lowweight}(iii) and Theorem~\ref{teo:Scott}.
In \S 5, \S 6 and \S 7
we prove Theorem~\ref{teo:vertices},  Theorem~\ref{teo:lowweight} and
Theorem~\ref{teo:Scott} respectively. Theorem~\ref{teo:maximalvertex} is obtained as a corollary of Theorem ~\ref{teo:lowweight} in \S 6.
We end in \S 8 with an example to illustrate these theorems.
The proofs in \S 3 and \S 4 are the most 
technical in this paper; we suggest the reader returns to the proofs after seeing the applications.

\section{Prerequisites}
The following results have short proofs using \cite{GiannelliWildonDec}. 
For background on local representation theory we refer the reader to \cite{Alperin}.

\begin{proposition}\label{prop:Green}
Let $\gamma$ be a $p$-core partition and let $2n = |\gamma| + pw$.
The Green correspondence induces a bijection between the indecomposable summands of $H^{(2^n)}$ in
$B(\gamma, w)$ with vertex $Q_t$ and the indecomposable projective summands of $H^{(2^{n-tp})}$
in $B(\gamma, w-2t)$.
\end{proposition}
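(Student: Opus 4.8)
The plan is to use the theory of $p$-permutation (trivial source) modules, for which we refer to \cite{Alperin} and \cite{GiannelliWildonDec}. Since $H^{(2^n)}$ is a permutation module it is a $p$-permutation module, and so by Brou\'e's theory the Green correspondence restricts to a bijection between the indecomposable summands of $H^{(2^n)}$ with vertex $Q_t$ and the indecomposable projective summands of the Brauer quotient $H^{(2^n)}(Q_t)$, regarded as a module for $\bar N := N_{S_{2n}}(Q_t)/Q_t$. Concretely, writing $H^{(2^n)} = \bigoplus_i U_i$ with each $U_i$ indecomposable, one has $H^{(2^n)}(Q_t) = \bigoplus_i U_i(Q_t)$, and $U_i(Q_t)$ is a nonzero indecomposable projective $F\bar N$-module precisely when $Q_t$ is a vertex of $U_i$; moreover the correspondence $U_i \mapsto U_i(Q_t)$ is compatible with the Brauer correspondence of blocks. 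Thus the problem reduces to computing $H^{(2^n)}(Q_t)$, identifying its projective summands, and tracking the blocks.

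First I would pin down the normalizer. As $Q_t$ is a Sylow $p$-subgroup of $S_2 \wr S_{tp}$ and $p$ is odd, $Q_t$ is carried by a Sylow $p$-subgroup of $S_{tp}$ acting on the $tp$ pairs $\{1,2\},\ldots,\{2tp-1,2tp\}$; such a subgroup has no fixed pairs, so the support of $Q_t$ is $\{1,\ldots,2tp\}$ and its fixed points are exactly $\{2tp+1,\ldots,2n\}$. Since the support is characteristic, any element normalizing $Q_t$ preserves both sets, giving $N_{S_{2n}}(Q_t) = N_{S_{2tp}}(Q_t) \times S_{2(n-tp)}$ and hence $\bar N = \bar N_1 \times S_{2(n-tp)}$ with $\bar N_1 = N_{S_{2tp}}(Q_t)/Q_t$. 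The permutation basis of the Brauer quotient consists of the $Q_t$-fixed pair-partitions of $\{1,\ldots,2n\}$; a pair meeting a fixed point must lie inside the fixed points, so every such partition is the disjoint union of a $Q_t$-invariant pair-partition of $\{1,\ldots,2tp\}$ and an arbitrary pair-partition of $\{2tp+1,\ldots,2n\}$. This yields an isomorphism of $p$-permutation $F[\bar N_1\times S_{2(n-tp)}]$-modules
\[ H^{(2^n)}(Q_t) \;\cong\; H^{(2^{tp})}(Q_t) \otimes H^{(2^{n-tp})}, \]
where $\otimes$ denotes the outer tensor product and $H^{(2^{n-tp})}$ is the Foulkes module for $S_{2(n-tp)}$.

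Because all modules in sight are $p$-permutation modules, the indecomposable projective summands of such an outer tensor product are exactly the outer tensor products $P_1 \otimes P_2$ of indecomposable projective summands $P_1$ of the first factor and $P_2$ of the second (an indecomposable $p$-permutation module for a direct product is an outer tensor product of such modules, and its vertex is the product of the vertices, so it is projective iff both factors are). Here I would invoke \cite{GiannelliWildonDec}: since $Q_t$ is the full Sylow $p$-subgroup of $S_2 \wr S_{tp}$, the Foulkes module $H^{(2^{tp})}$ has a unique summand of vertex $Q_t$, namely the Scott module, whose Brauer quotient is the projective cover $P_0$ of the trivial $F\bar N_1$-module; equivalently $H^{(2^{tp})}(Q_t)$ has $P_0$ as its only indecomposable projective summand, and it lies in the principal block of $\bar N_1$. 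Consequently the indecomposable projective summands of $H^{(2^n)}(Q_t)$ are exactly the modules $P_0 \otimes P_2$ with $P_2$ an indecomposable projective summand of $H^{(2^{n-tp})}$, and composing with the bijection of the first paragraph gives the asserted correspondence $U \mapsto U(Q_t) = P_0 \otimes P_2 \mapsto P_2$.

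It remains to match blocks. The Brauer correspondence preserves the $p$-core, so a summand $U$ lying in $B(\gamma,w)$ corresponds to a projective $P_2$ lying in a block of $S_{2(n-tp)}$ with $p$-core $\gamma$; since $2(n-tp) = |\gamma| + (w-2t)p$, that block is forced to be $B(\gamma,w-2t)$, and conversely. This establishes the bijection in the stated form. The main obstacle is the input drawn from \cite{GiannelliWildonDec}, namely the uniqueness of the vertex-$Q_t$ summand of $H^{(2^{tp})}$ together with the identification of its Brauer quotient with the principal-block projective $P_0$; without this single-summand property the first tensor factor would contribute several projectives and the bijection would acquire spurious multiplicity. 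The weight shift $w \mapsto w-2t$, by contrast, is then automatic from the preservation of $\gamma$ and the cardinality $2(n-tp)$.
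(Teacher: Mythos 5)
Your proof is correct and follows essentially the same route as the paper's: the paper simply cites \cite{GiannelliWildonDec} (Theorems 1.2 and 2.7) for the tensor factorization $V \boxtimes W$ of the Green correspondent, with $V$ the projective cover of the trivial $F N_{S_{2tp}}(Q_t)/Q_t$-module and $W$ an indecomposable projective summand of $H^{(2^{n-tp})}$ in $B(\gamma,w-2t)$, and your Brauer-quotient computation of $H^{(2^n)}(Q_t)$ is precisely how those cited results are proved. The one substantive input you still take from \cite{GiannelliWildonDec} --- that $H^{(2^{tp})}(Q_t)$ has the principal-block projective cover of the trivial module as its unique indecomposable projective summand --- is exactly the content the paper's citation supplies, so the two arguments coincide in substance.
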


\begin{proof}
Let $U$ be an indecomposable non-projective summand of $H^{(2^n)}$ in $B(\gamma, w)$. 
By \cite[Theorem 1.2]{GiannelliWildonDec}, $U$ has $Q_t$ as a vertex for some $t \in \N$ and
its Green correspondent has a tensor
factorization $V \boxtimes W$ as a representation of $(N_{S_{2tp}}(Q_t)/Q_t) \times S_{2(n-tp)}$.
Here $V$ and $W$ are projective and $W$ is an indecomposable summand of $H^{(2^{n-tp})}$.
By \cite[Theorem 2.7]{GiannelliWildonDec}, which is proved using the results in \cite{BroueBrauer},
$W$ lies in the block $B(\gamma, w-2t)$ of $S_{2(n-tp)}$. The map sending $U$ to $W$ therefore has
the required properties.
\end{proof}

\begin{proposition}\label{prop:proj}
Let $\gamma$ be a $p$-core partition and let $2n = |\gamma| + pw$.
\begin{thmlist}
\item All the summands of $H^{(2^n)}$ in $B\bigl( \gamma,w(\gamma) \bigr)$
are projective. 
\item If $2\lambda$ is a maximal element of $\mathcal{E}(\gamma)$
then $\lambda$ is $p$-regular and $P^{2\lambda}$ is a summand of $H^{(2^n)}$ lying in $B\bigl(
\gamma, w(\gamma) \bigr)$.
\item If $P$ is an indecomposable projective summand of $H^{(2^n)}$ in $B(\gamma,w(\gamma))$ 
then $P = P^{2\nu}$ for some $2\nu \in \mathcal{E}(\gamma)$ and
the ordinary character of $P$ (defined using Brauer reciprocity)
is $\sum_{2\lambda \in E} \chi^{2\lambda}$
for some $E \subseteq \mathcal{E}(\gamma)$.
\end{thmlist}

\end{proposition}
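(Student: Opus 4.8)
The plan is to prove the three parts of Proposition~\ref{prop:proj} by combining the ordinary-character decomposition of $H^{(2^n)}$ with the block theory of the symmetric group and the defining property of $w(\gamma)$. Recall that over a field of characteristic zero the character of $H^{(2^n)}$ is $\sum_{\lambda \vdash n} \chi^{2\lambda}$, and that by the theory of $p$-blocks (the Nakayama conjecture) a character $\chi^{2\lambda}$ lies in $B(\gamma, w)$ precisely when $2\lambda$ has $p$-core $\gamma$ and $p$-weight $w$. By the very definition of $w(\gamma)$ as the minimal number of $p$-hooks needed to reach an even partition, the partitions $2\lambda \in \mathcal{E}(\gamma)$ are exactly the even partitions of $p$-weight $w(\gamma)$ with $p$-core $\gamma$, and no even partition of smaller positive weight exists over this core.

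For part~(i), the key idea is that $w(\gamma)$ is the \emph{minimal} weight for which $H^{(2^n)}$ has any summand with core $\gamma$. First I would observe that a summand in $B(\gamma, w(\gamma))$ has a vertex $Q_t$; by Proposition~\ref{prop:Green} this summand corresponds under Green correspondence to a projective summand of a smaller Foulkes module in $B(\gamma, w(\gamma)-2t)$. Since $w(\gamma)$ is minimal, the only possibility is $t=0$, forcing the summand itself to be projective (its vertex is trivial). Concretely, if such a summand were non-projective its vertex $Q_t$ would be nontrivial, and the Green correspondent would sit in a strictly smaller-weight block $B(\gamma, w(\gamma)-2t)$ still containing even partitions with core $\gamma$, contradicting minimality of $w(\gamma)$.

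For part~(ii), I would argue that $\lambda$ is $p$-regular because $2\lambda$, having all even parts and being maximal in dominance order among $\mathcal{E}(\gamma)$, cannot have $p$ equal consecutive parts (here oddness of $p$ is essential: $2\lambda$ repeating a part $p$ times would force an odd count in a way incompatible with maximality under adding $p$-hooks). To show $P^{2\lambda}$ is a summand, I would use part~(iii) of the cited prior results together with the fact that the projective cover $P^{2\lambda}$ is the unique indecomposable projective whose ordinary character contains $\chi^{2\lambda}$ with the maximal partition as its top; since $\chi^{2\lambda}$ occurs in the character of $H^{(2^n)}$ and the summand in this block is projective by part~(i), the indecomposable projective accounting for $\chi^{2\lambda}$ must be $P^{2\lambda}$.

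For part~(iii), I would decompose the projective summands of $H^{(2^n)}$ in $B(\gamma, w(\gamma))$ into indecomposables $P^{2\nu}$ and use Brauer reciprocity to compute their ordinary characters as $\sum_\mu d_{\mu,2\nu}\chi^\mu$. The constraint is that the total character $\sum_{2\lambda \in \mathcal{E}(\gamma)} \chi^{2\lambda}$ (the even partitions being the only ones from $H^{(2^n)}$ in this block) must be a sum of such projective characters, each of which is multiplicity-free in the relevant even partitions. The hard part will be showing that each $P^{2\nu}$ appearing has $2\nu \in \mathcal{E}(\gamma)$ and that its character restricted to even partitions is supported on a subset $E \subseteq \mathcal{E}(\gamma)$; this requires knowing that in the decomposition matrix the even partitions of weight $w(\gamma)$ behave coherently, which I expect to follow from the structure of $\mathcal{E}(\gamma)$ together with the fact that each even partition of weight $w(\gamma)$ labels a column index $2\nu$ contributing exactly once. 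The main obstacle throughout is controlling which $p$-regular labels $2\nu$ can arise and ensuring the character bookkeeping closes up exactly, which is where the detailed combinatorics of even partitions and the oddness of $p$ do the essential work.
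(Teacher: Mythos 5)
Your part (i) is exactly the paper's argument (which is Proposition 5.1 of \cite{GiannelliWildonDec}): a non-projective summand in $B(\gamma,w(\gamma))$ would, via Proposition~\ref{prop:Green}, yield a projective summand of $H^{(2^{n-tp})}$ in $B(\gamma,w(\gamma)-2t)$ with $t\ge 1$, hence an even partition with $p$-core $\gamma$ and $p$-weight $w(\gamma)-2t$, contradicting the minimality in the definition of $w(\gamma)$. For (ii) and (iii) the paper simply cites \cite{GiannelliWildonDec}, so your reconstruction must stand on its own, and there it has two genuine gaps. The first is your $p$-regularity argument in (ii): the claim that a part repeated $p$ times ``would force an odd count in a way incompatible with maximality'' is not a checkable statement, and maximality of $2\lambda$ in $\mathcal{E}(\gamma)$ under dominance does not in any obvious way forbid a repeated part. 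The $p$-regularity should instead fall out at the end: once (iii) is in place, the constituent $\chi^{2\lambda}$ of the block component of the character of $H^{(2^n)}$ must occur in the character of some indecomposable projective summand $P^{2\nu}$ with $2\nu\in\mathcal{E}(\gamma)$; Brauer reciprocity gives $d_{2\lambda\,2\nu}\neq 0$, hence $2\nu\unrhd 2\lambda$, hence $2\nu=2\lambda$ by maximality, and $2\lambda$ is $p$-regular because it labels an indecomposable projective (and $\lambda$ is $p$-regular if and only if $2\lambda$ is). Note this also reverses your logical order: you invoke (iii) inside the proof of (ii), so (iii) must be established first.

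The second gap is that the step you defer as ``the hard part'' of (iii) is never resolved, although it is in fact immediate and needs no further combinatorics of even partitions. The block component of the ordinary character of $H^{(2^n)}$ in $B(\gamma,w(\gamma))$ is the multiplicity-free sum $\sum_{2\mu\in\mathcal{E}(\gamma)}\chi^{2\mu}$, and by (i) it is the character of a direct sum of indecomposable projectives. Since the ordinary character of $P^{\mu}$ is $\sum_{\kappa}d_{\kappa\mu}\chi^{\kappa}$ with $d_{\mu\mu}=1$, the constituent $\chi^{\mu}$ itself must appear among the $\chi^{2\kappa}$ with $2\kappa\in\mathcal{E}(\gamma)$, forcing $\mu=2\nu$ for some $2\nu\in\mathcal{E}(\gamma)$; and every other constituent of the character of $P^{2\nu}$ is likewise forced to lie in $\mathcal{E}(\gamma)$ with multiplicity one, giving the subset $E$. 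As written, your proposal identifies the right ingredients but leaves precisely this closing bookkeeping, and the $p$-regularity claim, unproved.
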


\begin{proof}
Part (i) is Proposition 5.1 of \cite{GiannelliWildonDec}. Since this is a key result in this paper,
we briefly sketch the proof: if $U$ is a non-projective summand in $B\bigl( \gamma,w(\gamma) \bigr)$
then, by Proposition~\ref{prop:Green}, there is a projective summand of~$H^{(2^{n-tp})}$
in $B\bigl( \gamma,w(\gamma)-2t \bigr)$ for some $t \in \N$. But this implies that
there is an even partition
with $p$-core $\gamma$ and $p$-weight $w(\gamma)-2t$, contradicting the definition of $w(\gamma)$.
Part (ii) is Proposition~1.3 of \cite{GiannelliWildonDec}. 
Part (iii) follows immediately from ($\dagger$) in the proof of this proposition.
\end{proof}

We use the following proposition in the proof of Theorem~\ref{teo:lowweight} to
show that, if $w \le 2$, then 
the summand identified in Proposition~\ref{prop:proj}(ii) is the unique
summand of $H^{(2^n)}$ lying in its block. 

\begin{proposition}\label{prop:Cartan}
Let $\gamma$ be a $p$-core partition of $n$. If $|\mathcal{E}(\gamma)| < 2w(\gamma)+1$ then
there is a unique summand of $H^{(2^n)}$ in $B(\gamma,w(\gamma))$.
\end{proposition}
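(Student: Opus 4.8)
The plan is to leverage the structural results already established in Propositions~\ref{prop:proj} and~\ref{prop:Cartan}'s hypotheses, reducing the statement to a counting argument on ordinary characters. By Proposition~\ref{prop:proj}(i), every summand of $H^{(2^n)}$ in $B(\gamma,w(\gamma))$ is projective, so it suffices to bound the number of indecomposable projective summands. First I would record that, by Proposition~\ref{prop:proj}(iii), each such projective summand $P$ is of the form $P^{2\nu}$ for some $2\nu \in \mathcal{E}(\gamma)$, and its ordinary character is $\sum_{2\lambda \in E}\chi^{2\lambda}$ for some subset $E \subseteq \mathcal{E}(\gamma)$. The key point is that distinct indecomposable projective summands have characters supported on \emph{disjoint} subsets of $\{\chi^{2\lambda} : 2\lambda \in \mathcal{E}(\gamma)\}$, since the full character of $H^{(2^n)}$ restricted to $B(\gamma,w(\gamma))$ is the multiplicity-free sum $\sum_{2\lambda \in \mathcal{E}(\gamma)}\chi^{2\lambda}$ (this is where evenness and the definition of $\mathcal{E}(\gamma)$ enter).

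The heart of the argument is a lower bound on the size of the support $E$ of any single indecomposable projective summand. I would use the defining property of $P^{2\nu}$: it is the projective cover of $D^{2\nu}$, so its character is $\sum_\mu d_{\mu,2\nu}\,\chi^\mu$ by Brauer reciprocity, and the decomposition numbers $d_{\mu,2\nu}$ that are nonzero for $\mu = 2\lambda$ an even partition in $B(\gamma,w(\gamma))$ must be counted. The claim I would aim to establish is that each $P^{2\nu}$ appearing as a summand carries at least $2w(\gamma)+1$ distinct even characters in its support, or more precisely that the number of summands $m$ satisfies $m \cdot (\text{minimum support size}) \le |\mathcal{E}(\gamma)|$ together with a matching lower bound forcing $m = 1$. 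Concretely, if there were two or more summands, then partitioning $\mathcal{E}(\gamma)$ into at least two nonempty blocks each of which is the even-support of some $P^{2\nu}$ would require $|\mathcal{E}(\gamma)| \ge 2 \cdot (\text{smallest possible support})$; the hypothesis $|\mathcal{E}(\gamma)| < 2w(\gamma)+1$ then yields a contradiction provided one can show every such support has size at least $w(\gamma)+1$.

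The main obstacle, and the step I expect to require the genuine input, is establishing the lower bound of $w(\gamma)+1$ (or whatever threshold makes the arithmetic close) on the even-support of each indecomposable projective summand. This is essentially a statement about how many even partitions must appear in the ordinary character of a single $P^{2\nu}$, and I would expect to extract it from the combinatorics of adding $w(\gamma)$ $p$-hooks to $\gamma$ together with properties of the decomposition matrix in the block $B(\gamma,w(\gamma))$. One natural approach is to use a dominance or triangularity argument: order $\mathcal{E}(\gamma)$ by dominance, note by Proposition~\ref{prop:proj}(ii) that the maximal element $2\lambda$ gives $P^{2\lambda}$ as a summand with $d_{2\lambda,2\lambda}=1$, and then argue that the projective cover of the top even partition must ``pull in'' sufficiently many smaller even partitions through the nonzero decomposition numbers $d_{2\mu,2\lambda}$.

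Once the support lower bound is in hand, the conclusion is immediate: disjointness of supports plus the bound forces a single summand, so I would close by assembling these pieces. The summand is then necessarily the $P^{2\lambda}$ of Proposition~\ref{prop:proj}(ii) with $2\lambda$ the maximal element of $\mathcal{E}(\gamma)$, and uniqueness follows from the disjointness. I would be careful to phrase the final counting so that the strict inequality $|\mathcal{E}(\gamma)| < 2w(\gamma)+1$ is used exactly where two summands would force $|\mathcal{E}(\gamma)|$ to be at least $2w(\gamma)+1$.
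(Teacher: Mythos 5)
Your overall strategy is the same as the paper's: reduce to projective summands via Proposition~\ref{prop:proj}(i), use Proposition~\ref{prop:proj}(iii) to see that each indecomposable projective summand has ordinary character supported on a subset of $\mathcal{E}(\gamma)$, observe that these supports are disjoint because the ordinary character of $H^{(2^n)}$ is multiplicity-free, and then win by a counting argument once each support is shown to have size at least $w(\gamma)+1$. All of that is correct and is exactly how the paper argues.

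However, the step you yourself flag as ``the main obstacle'' --- the lower bound of $w(\gamma)+1$ on the number of ordinary constituents of each indecomposable projective summand --- is a genuine gap, and your proposed route to it would not close it. This bound is not a consequence of dominance/triangularity of the decomposition matrix: triangularity only gives $d_{2\nu\,2\nu}=1$, i.e.\ one nonzero entry in the column, and there is no elementary argument that forces the projective cover of the maximal even partition to ``pull in'' $w(\gamma)$ further constituents. The actual input is a nontrivial theorem, proved independently by Richards \cite[Theorem~2.8]{Richards} and Bessenrodt--Uno \cite[Proposition~4.6(i)]{BessenrodtUno}: if $\nu$ is a $p$-regular partition in a block of $p$-weight $w$, then $d_{\mu\nu}>0$ for at least $w+1$ partitions $\mu$. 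Combined with Brauer reciprocity and Proposition~\ref{prop:proj}(iii) (which forces all these constituents to lie in $\mathcal{E}(\gamma)$), this gives the required support bound, and then your counting argument finishes the proof. Without importing that result (or proving it, which is well beyond a triangularity argument), your proof is incomplete at precisely the point where all the work lies.
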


\begin{proof}
It was proved independently in 
\cite[Theorem~2.8]{Richards} and \cite[Proposition~4.6(i)]{BessenrodtUno}
that if $\nu$ is a $p$-regular partition of $n$ lying in a block
of $p$-weight~$w$ then $d_{\mu \nu} > 0$ for at least $w+1$ partitions $\mu$.
By Proposition~\ref{prop:proj}(i), all the summands of $H^{(2^n)}$ in $B\bigl(\gamma,w(\gamma)\bigr)$
are projective. By Brauer reciprocity, the ordinary character of each
such summand contains at least $w+1$ distinct
irreducible characters. The result now follows from Proposition~\ref{prop:proj}(iii).
\end{proof}

\section{Even partitions and the abacus}

We make extensive use of James' abacus notation, as defined in \cite[page 78]{JK}.
We number the abacus runners from $0$ to $p-1$. 
By convention, our abaci have beads in all strictly negative positions.
We say that beads before the first space are \emph{initial}.
For $r \in \N_0$,
we define \emph{row} $r$ to consist of positions
$pr,\ldots, pr+p-1$.
 By a \emph{single-step move} 
we mean a move
of a bead into a  space immediately below it.

It is simple to recognise even partitions on the abacus.

\begin{definition}
Let $A$ be an abacus. We say that beads in positions
$\beta$ and $\beta'$ of $A$ where $\beta < \beta'$ are \emph{adjacent beads} and form a \emph{gap} if
there are spaces in positions $\beta+1, \ldots, \beta'-1$. This gap
is \emph{odd} if  $\beta'-\beta$ is even and \emph{even} if $\beta'-\beta$ is odd.
If $\beta'-\beta=1$, then we say that the beads are in \emph{adjacent positions}.
\end{definition}

\begin{lemma}
Let $A$ be an abacus representing a partition $\nu$. Then $\nu$ is even if and only if
$A$ has no odd gaps.\hfill$\qed$
\end{lemma}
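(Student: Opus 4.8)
The plan is to translate both conditions --- ``$\nu$ is even'' and ``$A$ has no odd gaps'' --- into equivalent statements about the parts of $\nu$, and then observe that the two statements coincide. The bridge is the standard dictionary between the beads of the abacus and the parts of the partition. I would first list the occupied positions of $A$ in decreasing order as $\beta_{(1)} > \beta_{(2)} > \cdots$; under the convention that beads fill all strictly negative positions, these satisfy $\beta_{(i)} = \nu_i - i$, so that $\nu_i$ equals the number of spaces in positions less than $\beta_{(i)}$. I would record this dictionary, together with the fact that the trailing zero parts of $\nu$ correspond exactly to the \emph{initial} beads (those before the first space), which have no spaces in smaller positions.

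The computational heart of the argument is a single identity. For two consecutive beads, at positions $\beta_{(i+1)} < \beta_{(i)}$, every position strictly between them is a space, and the number of such spaces is
\[ \beta_{(i)} - \beta_{(i+1)} - 1 = (\nu_i - i) - (\nu_{i+1} - (i+1)) - 1 = \nu_i - \nu_{i+1}. \]
Consequently a gap is present between these two beads precisely when $\nu_i > \nu_{i+1}$, and by the definition of an odd gap this gap is odd (that is, $\beta_{(i)} - \beta_{(i+1)}$ is even) exactly when $\nu_i - \nu_{i+1}$ is odd, i.e. when $\nu_i \not\equiv \nu_{i+1} \pmod 2$. This reduces the presence of an odd gap anywhere on $A$ to the existence of a consecutive pair of parts, in the weakly decreasing sequence $\nu_1 \ge \nu_2 \ge \cdots$ padded with zeros, of opposite parity.

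With this in hand the equivalence is immediate in both directions. If $\nu$ is even then every part is even, every consecutive difference $\nu_i - \nu_{i+1}$ is even, and so $A$ has no odd gaps. Conversely, if $A$ has no odd gaps then consecutive parts always share parity; since the part sequence is eventually $0$, propagating this common parity down the sequence forces every part to be even, whence $\nu$ is even. The step I would be most wary of is the bookkeeping at the boundary between the positive parts and the trailing zeros: one must include the gap between the last non-initial bead and the first initial bead, since it is precisely this gap whose parity anchors the common parity of all parts to $0$, rather than merely forcing the parts to agree with one another modulo $2$. The remaining checks --- verifying the identity $\nu_i = \beta_{(i)} + i$ against the stated abacus convention, and noting that gap parities are unaffected by adjoining full rows of beads (each row shifts all positions by the odd number $p$, preserving every difference $\beta' - \beta$) --- are routine.
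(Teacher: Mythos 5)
Your proof is correct and is precisely the routine dictionary argument the paper suppresses (the lemma is stated with its proof omitted as immediate): odd gaps between consecutive beads correspond exactly to consecutive parts of opposite parity, and the gap adjoining the initial beads anchors the common parity at $0$, so ``no odd gaps'' is equivalent to every part being even. One small slip worth noting: under the paper's convention that all strictly negative positions carry beads, the absolute formula $\beta_{(i)} = \nu_i - i$ is off by an additive constant (namely the number of beads in nonnegative positions), but this is harmless because your argument only uses the normalization-independent difference identity $\beta_{(i)} - \beta_{(i+1)} - 1 = \nu_i - \nu_{i+1}$.
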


The following  proposition is helpful in determining $w(\gamma)$. Informally, it states
that certain beads on an abacus representing $\gamma$ need not be moved to find
a partition in $\mathcal{E}(\gamma)$. 

\begin{proposition}\label{prop:beadsfromabove}
Let $p$ be odd and let $\gamma$ be a $p$-core represented by an abacus~$A$
with first space in position $0$. Let $B$ be an abacus representing a partition $\lambda \in \mathcal{E}(\gamma)$
obtained by a sequence of single-step moves on~$A$. 
If $x \le -2$ then $B$ has a bead in position $x$.
\end{proposition}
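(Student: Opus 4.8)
The plan is to argue by the minimality built into the hypothesis. Since $\lambda \in \mathcal{E}(\gamma)$ is obtained from $\gamma$ by adding exactly $w(\gamma)$ $p$-hooks, and each single-step move adds precisely one $p$-hook (and moves only increase positions), the sequence producing $B$ consists of exactly $w(\gamma)$ single-step moves, and this number is minimal among all ways of passing from $A$ to an even abacus. I would first record the ambient facts: each single-step move sends a bead on runner $j$ from position $\beta$ to $\beta\p p$, so it stays within a runner, strictly increases positions, and (as $p$ is odd) flips the parity of exactly one bead-position. I would also restate the even criterion from the preceding lemma in the convenient form that $\nu$ is even if and only if every pair of consecutive beads differs by an odd number, i.e. the bead-positions, read in increasing order, alternate in parity. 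The goal then reduces to showing that the lowest empty position of $B$ is $\ge -1$.

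Suppose for contradiction that the lowest empty position $x_0$ of $B$ satisfies $x_0 \le -2$, so that all positions below $x_0$ are occupied (the region $A$ started packed, and moves only push beads up). Let $a$ be the first bead above the gap, so $x_0,\dots,a\m 1$ are empty. Applying the parity reformulation to the consecutive pair $(x_0\m 1,a)$ forces $a\equiv x_0\pmod 2$, hence $a-x_0$ is even and $\ge 2$; thus the empty run immediately above the packed region has \emph{even} length and in particular $x_0,x_0\p 1$ are both empty. Moreover, since the number of beads on each runner is conserved and position $x_0$ (occupied in $A$) has been vacated with everything below it still occupied, at least one bead on runner $x_0\bmod p$ — and similarly on runner $(x_0\p 1)\bmod p$ — has been pushed strictly above its position in $A$. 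In other words, the deep vacancy has been ``paid for'' by excess beads parked higher up.

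I would then produce a contradiction with minimality by an exchange that pulls a suitable pair of these excess beads back down to fill $x_0$ and $x_0\p 1$, restoring the packing by two positions. Filling $x_0,x_0\p 1$ at the bottom keeps evenness there, since the new gap above them differs from the old gap $(x_0\m 1,a)$ by $2$ and so remains of odd difference. The source beads must be chosen to lie on runners $x_0\bmod p$ and $(x_0\p 1)\bmod p$ (so that they can slide down to $x_0,x_0\p 1$ by legal single-step moves in reverse) and to form a \emph{consecutive} pair in $B$ (so that deleting them higher up merges three even gaps into one of odd difference, again preserving evenness). The resulting abacus $B'$ still lies above $A$ positionwise, hence represents an even partition with $p$-core $\gamma$, but it is reached from $\gamma$ using two fewer hooks; this gives a partition of $p$-weight $w(\gamma)\m 2$, contradicting the minimality of $w(\gamma)$.

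The main obstacle is precisely making this exchange legitimate, i.e. simultaneously respecting the runner structure and preserving evenness. The rigidity comes from the parity reformulation together with $p$ odd: every single-step move flips one parity, so any reduction between even configurations must alter the hook count by an \emph{even} amount, and a naive single reverse move always creates an odd gap. The delicate step is therefore to prove that the required consecutive pair of excess beads, lying on the two runners $x_0,x_0\p 1\pmod p$, actually exists in $B$; I expect to extract it by examining the even-length empty run and the parked excess beads on those two runners. I would also flag the boundary case that explains why the statement is sharp at $-2$ rather than $-1$: the bead in position $-1$ abuts the empty position $0$ at the surface, so vacating it does not force a deep odd gap and costs no extra hook, which is why position $-1$ is genuinely allowed to be empty in $B$.
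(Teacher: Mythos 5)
Your setup is sound and matches the paper's opening moves: you argue by contradiction from the minimality of $w(\gamma)$, and your parity observation correctly forces the two positions $x_0$ and $x_0+1$ to be simultaneously vacated (the paper phrases this as: to avoid an odd gap, the bead in position $x+1$ must also move). The contradiction mechanism — exhibit an even configuration with the same bead count per runner but strictly smaller total displacement — is also the right one. However, there is a genuine gap at exactly the point you flag as ``the delicate step'': the consecutive pair of excess beads on runners $x_0 \bmod p$ and $(x_0+1)\bmod p$ that your exchange requires need not exist. After normalising so that $x_0 = pr$, let $b$ and $c$ be the highest beads on runners $0$ and $1$ of $B$ below positions $pr$ and $pr+1$. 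If $b$ and $c$ happen to lie in the same row, then pulling them up to $pr$ and $pr+1$ works and is essentially your exchange (the paper disposes of this case in two lines). But when $b$ and $c$ lie in different rows — which is perfectly possible, since $B$ is only constrained to have all gaps even — there is in general no adjacent pair consisting of one bead on runner $0$ and one on runner $1$: the beads separating $b$ from $c$ live on other runners, and any attempt to fill $pr$ and $pr+1$ using only beads from runners $0$ and $1$ creates odd gaps at the vacated positions.

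This different-row case is the actual content of the proposition, and it is where the paper spends all its effort: the repair necessarily involves beads $b'$ and $c'$ on \emph{other} runners $u$ and $v$ (the first beads following $b$ and $c$), a four-way case analysis on the parities of $u$ and $v$ with further subcases for $u<v$, $u>v$, $u=0$, $v=0$, $v=1$, and a weight reduction of $2(s-r)$ rather than your fixed reduction by two hooks. So your proposal is not merely missing a routine verification; it is missing the combinatorial core of the argument, and the specific existence claim you hope to ``extract'' is false as stated. To complete the proof along your lines you would have to broaden the exchange to allow auxiliary beads off the two distinguished runners, at which point you are reconstructing the paper's Figures~2 and~3.
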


\begin{proof}
Suppose, for a contradiction, that there exists $x < -1$ such that $B$ has a space in position $x$.
Choose $x$ minimal with this property. Thus if $y < x$ then the bead in position $y$ of $A$ remains fixed,
and the bead in position $x$ is moved. To avoid an odd gap, the bead in position $x+1$ of $A$ is also moved.
By increasing the number of initial beads in $A$ we may assume that $x = pr$ for some $r \in \Z$.
Therefore positions $pr$ and $pr+1$ in $B$ have spaces.

Let $b$ be the highest bead on runner~$0$ of $B$ below position~$pr$,
and let $c$ be the highest bead on runner~$1$ of~$B$ below position~\hbox{$pr+1$}.
If $b$ and $c$ are on the same row then 
we may move $b$
up to position~$pr$ and $c$ up to position $pr+1$ without
changing the parity of any gap in~$B$. (This holds because there is a bead in position $pr-1$,
and the gap between this bead and the adjacent bead in $B$ is even.)
We therefore obtain  an even partition with $p$-core
$\gamma$ and strictly smaller $p$-weight, a contradiction.

Assume that $c$ is on a strictly lower row than $b$; the other case
is dealt with symmetrically.
Let $b'$ be the first bead in a greater numbered position than $b$,
and let $c'$ be the first bead in a greater numbered position than $c$.
Suppose that $b'$ is on runner $u$ 
and~$c'$ is on runner $v$.
There are at most $2(p-1)$ spaces 
between $b$ and $b'$ since otherwise  $b'$ can be moved two rows up without changing the
parity of any gaps. 
Hence either $b'$ is on the same row as $b$,
in which case~$u$ is odd, or $b'$ is on the row directly below,
in which case $u$ is even. A similar remark
applies to $c$ and $c'$, swapping odd and even, with one extra case
when $v = 0$, in which case $c'$ is two rows below~$c$.

We claim that there is a
sequence of bead moves using the beads $b$ and~$b'$ and beads on runners $u$ and $v$ that
gives a new abacus $B^\star$, representing a partition of strictly smaller $p$-weight than~$\lambda$,
such that $B^\star$ 
has no odd gaps. 
This can be shown by considering the four cases
for the parity of $u$ and~$v$, splitting the different parity
cases into the subcases $u<v$ and $u>v$, as shown in Figure~2,
and the equal parity cases into subcases for $u=0$ and $u\not= 0$,
as shown in Figure~3. The indicated bead moves deal correctly
with the exceptional cases when $u=0$, $v=0$ or $v=1$ and two runners coincide. (Note that $u=1$ 
is impossible because then $b'=c$ and $b$ and $c$ are in the same row.)
If $v=0$ and $u$ is odd then $c'$ is two rows below $c$, 
the bead moves in the top right diagram in Figure~2 apply and a bead is moved into
the position vacated by~$b$.

We give full details for the case 
where $B$ is the top-left abacus
in Figure~2. Thus
$c$ is on a strictly lower row than $b$, and $b'$ and $c'$ are in the same rows as $b$ and $c$, respectively. 
Let $s$ be the row of bead $b$, $t$ be the row of bead $c$ and let $w$
be the $p$-weight of the partition $\lambda$. 
Let $B'$ be the abacus obtained from $B$ by moving beads~$b$ and~$c$ up to positions $pr$ and $pr+1$ respectively by a sequence of single-step bead moves.
 Notice that $B'$ has no odd gaps in rows $1,\ldots,s-1$, 
 and that $B'$ represents a partition of $p$-weight $w-(s-r)-(t-r)$.
Let $B''$ be the abacus obtained from $B'$ by making the unique series of single-step moves of beads on runner $u$ that has the final effect of swapping bead $b'$ with the space
in position $pt+u$, in row $t$ on runner $u$. (The bead that
ends in position $pt+u$ in $B''$ is therefore the lowest bead on runner~$u$ of $B'$
above this position.) It is easily seen that
$B''$ has no odd gaps and represents a partition of $p$-weight 
\[ w-(s-r)-(t-r)+(t-s)=w-2(s-r). \]
This is clearly strictly less than $w$, the $p$-weight of $\lambda$, as required.
\end{proof}

\begin{figure}[t]

\includegraphics{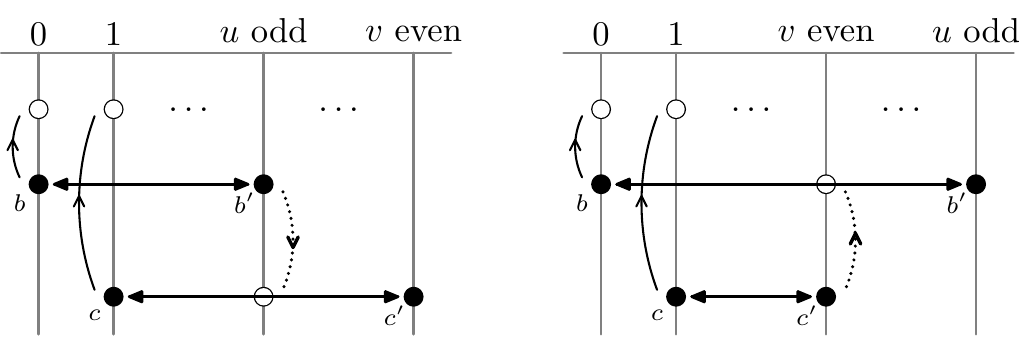}

\medskip
\includegraphics{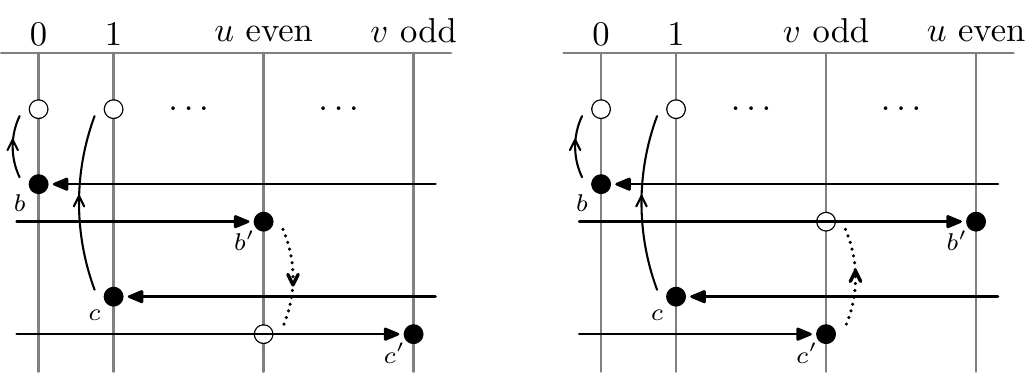}
\caption{\small Bead moves on the abacus $B$ used in
the proof of Proposition~\ref{prop:beadsfromabove} when~$u$ and $v$ have different parity. 
White circles show
spaces in $B$ that are present by hypothesis and are
the target of bead moves; the two adjacent spaces are positions~$pr$
and $pr+1$. Thick arrows show even gaps in~$B$. 
Solid curved arrows show moves of the beads $b$ and $c$ on runners $0$ and $1$, respectively.
On runners $u$ and~$v$ dotted curved arrows show the overall effect
of a sequence of single-step upward or downward moves using the marked bead and the
 beads between the marked bead and the target space.}
\end{figure}

\begin{figure}[h]

\includegraphics{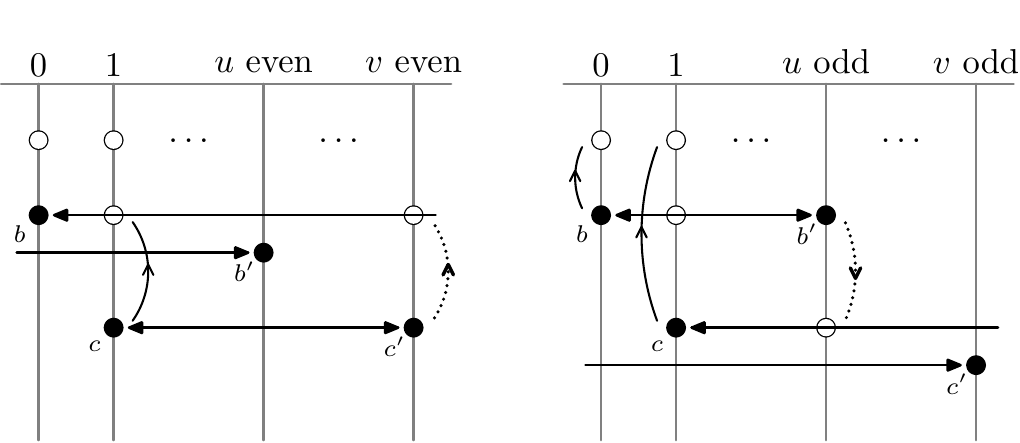}

\medskip
\includegraphics{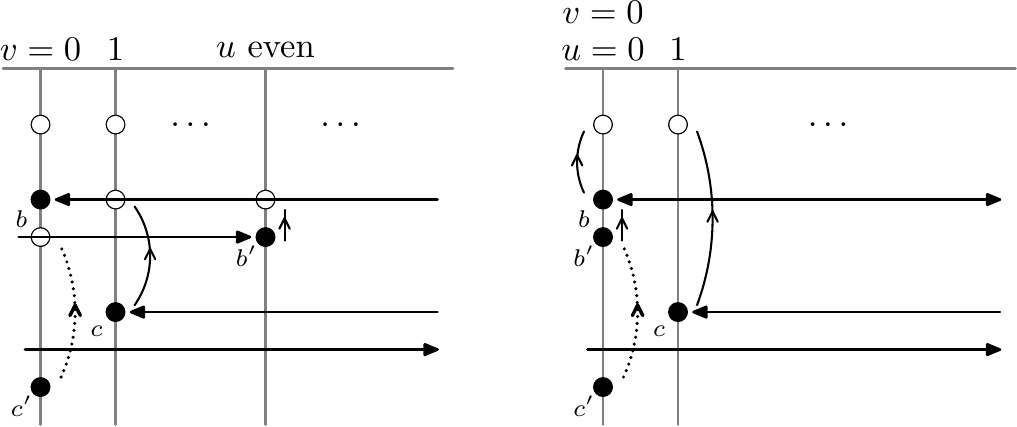}

\caption{\small Bead moves on the abacus $B$ used in
the proof of Proposition~\ref{prop:beadsfromabove} when $u$ and $v$ have equal parity.
The conventions for arrows are as described in the caption of Figure~2.
The cases $u=0$, $v\not=0$ and $u=v=0$ are dealt with separately: in the latter case 
at least four
beads move, bead~$b'$ is moved
to the position vacated by bead~$b$ and a bead is moved into
the position vacated by bead~$b'$.
}
\end{figure}

%

Despite its special nature, the following result appears to require the full power 
of Proposition~\ref{prop:beadsfromabove}. 

\begin{proposition}\label{prop:core}
Let $\gamma$ be the $p$-core represented by an abacus having $t$ beads on runner $u$
and $t'$ beads on runner $u'$, where $u < u'$, and no beads on any other runner.
\begin{thmlist}
\item If $u$ and $u' - u$ are both even then $w(\gamma) = tt'$ and a partition
in $\mathcal{E}(\gamma)$ can be obtained by moving each of the $t'$ beads on runner $u'$
down $t$ steps.
\item If $u$ is odd and $u' = p-1$ then $w(\gamma) = t(t'+1)$ and a partition
in $\mathcal{E}(\gamma)$ 
can be obtained by moving each of the $t$ beads on runner $u$ down $t'+1$ steps.
\end{thmlist}
\end{proposition}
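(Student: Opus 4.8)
The plan is to prove the two inequalities $w(\gamma)\le\cdot$ and $w(\gamma)\ge\cdot$ separately, working throughout on the abacus and using the criterion (the preceding Lemma) that a partition is even if and only if its abacus has no odd gaps; equivalently, since $p$ is odd and a single-step move shifts a bead by $p$, a partition is even exactly when every two consecutive beads occupy positions of opposite parity. In both parts the proposition names a partition $\lambda$ obtained from $\gamma$ by an explicit sequence of single-step moves, of length $tt'$ in (i) and $t(t'+1)$ in (ii); verifying that $\lambda$ is even will give the upper bounds $w(\gamma)\le tt'$ and $w(\gamma)\le t(t'+1)$ at once.

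For the upper bound I would first carry out the prescribed moves and check the absence of odd gaps in the resulting abacus. In case (i) each of the $t'$ beads on runner $u'$ is moved down $t$ rows, so that all of them come to lie above the $t$ beads of runner $u$; the hypotheses that $u$ and $u'-u$ are even are precisely what force the difference between each pair of consecutive beads to be odd, so that no gap is odd. In case (ii) each of the $t$ beads on runner $u$ is moved down $t'+1$ rows, and the hypotheses $u$ odd and $u'=p-1$ play the analogous role. In each case this is a routine parity check on the finitely many gaps altered by the moves.

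The substance of the proposition is the matching lower bound, and this is where I expect the real difficulty to lie; the plan is to exploit Proposition~\ref{prop:beadsfromabove}. After adjusting the number of initial beads we may assume the abacus $A$ for $\gamma$ has its first space in position $0$ and beads in all strictly negative positions. Let $\lambda\in\mathcal{E}(\gamma)$ be obtained from $A$ by a minimal, i.e.\ length $w(\gamma)$, sequence of single-step moves. By Proposition~\ref{prop:beadsfromabove} no bead in a position $\le -2$ is ever moved, so the whole reconfiguration takes place among the finitely many beads in positions $\ge -1$. The key bookkeeping step is to identify these movable beads exactly: passing from the two-runner display of $\gamma$ to the normal form required by Proposition~\ref{prop:beadsfromabove} involves a shift that permutes the runners, and the delicate point is whether the bead in position $-1$, on runner $p-1$, lies among the movable beads and must be displaced. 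This is exactly what separates the two cases. When $u'=p-1$ and $u$ is odd, as in (ii), this extra bead sits on runner $u'$ and is forced to participate, so that runner $u'$ effectively carries $t'+1$ movable beads; this accounts for the factor $t'+1$, and realises the answer as the area $t\cdot(t'+1)$ of a rectangle. In case (i) the parities instead permit an even arrangement that leaves the bead in position $-1$ fixed, and the relevant rectangle is $t\times t'$.

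With the movable beads pinned down, it remains to show that producing an arrangement with no odd gaps costs at least the claimed number of single-step moves. I would argue this by a displacement estimate: the no-odd-gaps condition forces the beads of the two runners to be \emph{separated}, in the sense that no two may occupy positions producing an adjacent pair of equal parity, and the minimal total downward displacement achieving such a separation is the product $tt'$ (respectively $t(t'+1)$). The main obstacle is to make this count rigorous in the correct (infinite) display — in particular to show that no cleverer interleaving of the two runners does better than the block move used for the upper bound, and that the boundary bead in position $-1$ really is forced in case (ii). It is for this step that the full strength of Proposition~\ref{prop:beadsfromabove}, freezing everything below position $-1$ while still allowing the single bead at $-1$ to move, is needed.
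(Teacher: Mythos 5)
Your setup is the same as the paper's: the upper bound by exhibiting the stated sequence of moves and checking gap parities, and the lower bound by invoking Proposition~\ref{prop:beadsfromabove} to freeze every bead below position $-1$, so that only the beads on runners $u$ and $u'$ (and possibly the bead at $-1$) can be involved. But the proposal stops exactly where the proposition becomes nontrivial. You write that the minimal displacement ``is the product'' and that the main obstacle is ``to show that no cleverer interleaving of the two runners does better'' --- that obstacle \emph{is} the content of the lower bound, and you do not supply an argument for it. A vague appeal to a ``displacement estimate'' or a ``separation'' condition is not a proof: a priori the two families of beads could interleave in the final abacus, and one must rule this out quantitatively.

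The paper closes this gap with a concrete pigeonhole-plus-induction argument in case (i): first, the bead $d$ at position $-1$ is shown not to move in a minimal configuration (if it did, raising it back and raising every bead in a positive position one row would produce an even partition of strictly smaller weight); then, letting $c$ be the lowest of the original $t$ beads on runner $u$ and $c'$ the lowest on runner $u'$, whichever of them is lowest in the final abacus $B$ must have descended at least $t'$ (resp.\ $t$) rows --- otherwise the $t+t'$ beads occupy at most $t+t'-1$ rows of $B$, so some row carries a bead on each of runners $u$ and $u'$, and since $u'-u$ is even these form an odd gap. Deleting that lowest bead and inducting gives $(t-1)t'+t'=tt'$. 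Your heuristic for case (ii), that the initial bead at $-1$ ``is forced to participate'' so runner $p-1$ effectively carries $t'+1$ beads, points in the right direction (the ``$+1$'' does come from the parity constraint imposed by that bead), but again it is an observation about why the answer should be $t(t'+1)$, not a proof that fewer moves are impossible. You would need to adapt the same induction, noting as the paper does that since the bead at $-1$ lies on runner $p-1$, Proposition~\ref{prop:beadsfromabove} already confines all moves to runners $u$ and $p-1$.
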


\begin{proof}
We prove case (i) and then indicate the changes needed for (ii).
The $p$-core $\gamma$ is represented by the abacus $A$ shown below
in which the first space is in position $0$ and $p$ initial beads occupy
positions $-p, \ldots, -1$.
(If $t > t'$ then  runners $u$ and $u'$ should be swapped.)
Let $b$ and $b'$ be the beads in positions $u$ and $u'$ respectively and
let $c$ and $c'$ be the lowest beads on runners $u$ and $u'$ respectively.

\begin{center}
\begin{tikzpicture}[scale=1, x=0.5cm, y=0.5cm, every node/.style={transform shape}]
\tikzstyle{empty}=[draw, circle, minimum size=5pt, inner sep=0mm]
\tikzstyle{nero} =[draw, circle, minimum size=4pt, inner sep=0mm, fill]

\draw (-0.5,0.5)--(14.5,0.5);
\node at (2,-3.25) {$\cdots$}; 
\node at (7,-3.25) {$\cdots$}; 
\node at (12,-3.25) {$\cdots$}; 

\node at (2,1) {$\cdots$}; 
\node at (7,1) {$\cdots$}; 
\node at (12,1) {$\cdots$}; 

\node at (2,-5.5) {$\cdots$}; 
\node at (7,-5.5) {$\cdots$}; 
\node at (12,-5.5) {$\cdots$}; 

\node at (0,0) [nero] {}; \node[yshift=15pt] at (0,0) {$\scriptstyle 0$};
\node at (2,0) {$\cdots$}; 
\node at (4,0) [nero] {};  \node[yshift=15pt] at (4,0) {$\scriptstyle u$};
\node at (5,0) [nero] {}; \node[xshift=3pt,yshift=15pt] at (5,0) {$\scriptstyle u+1$};
\node at (7,0) {$\cdots$}; 
\node at (9,0) [nero] {}; \node[xshift=-3pt,yshift=15pt] at (9,0) {$\scriptstyle u'-1$};
\node at (10,0) [nero] {}; \node[yshift=15pt] at (10,0) {$\scriptstyle u'$};
\node at (12,0) {$\cdots$};
\node at (14,0) [nero] {}; \node[yshift=15pt] at (14,0) {$\scriptstyle p-1$};

\node at (0,-1) [empty] {};
\node at (2,-1) {$\cdots$};
\node at (4,-1) [nero] {};
\node at (5,-1) [empty] {};
\node at (7,-1) {$\cdots$};
\node at (9,-1) [empty] {};
\node at (10,-1) [nero] {};
\node at (12,-1) {$\cdots$};
\node at (14,-1) [empty] {};

\node at (0,-2) {$\vdots$};
\node at (4,-2) {$\vdots$};
\node at (5,-2) {$\vdots$};
\node at (9,-2) {$\vdots$};
\node at (10,-2) {$\vdots$};
\node at (14,-2) {$\vdots$};

\node at (0,-3.25) [empty] {};
\node at (4,-3.25) [nero] {};
\node at (5,-3.25) [empty] {};
\node at (9,-3.25) [empty] {};
\node at (10,-3.25) [nero] {};
\node at (14,-3.25) [empty] {};

\node at (0,-4.25) {$\vdots$};
\node at (4,-4.25) {$\vdots$};
\node at (5,-4.25) {$\vdots$};
\node at (9,-4.25) {$\vdots$};
\node at (10,-4.25) {$\vdots$};
\node at (14,-4.25) {$\vdots$};

\node at (0,-5.5) [empty] {};
\node at (4,-5.5) [empty] {};
\node at (5,-5.5) [empty] {};
\node at (9,-5.5) [empty] {};
\node at (10,-5.5) [nero] {};
\node at (14,-5.5) [empty] {};

\node[right] at (14,0) {$\scriptstyle d$};
\node[left] at (4,-1) {$\scriptstyle  b$};
\node[left] at (4,-3.25) {$\scriptstyle  c$};
\node[right] at (10,-5.5) {\raisebox{3.5pt}{$\scriptstyle  c'$}};
\node[right] at (10,-1) {\raisebox{1pt}{$\scriptstyle  b'$}};
\end{tikzpicture}
\end{center}

It is clear that $t$ single-step moves of each of the $t'$ beads between $b'$ and~$c'$ (inclusive,
starting by moving bead $c'$)
give an abacus representing an even partition. 
It remains
to show that $tt'$ moves are necessary.

Let $B$ be an abacus representing a partition in $\mathcal{E}(\gamma)$
obtained from $A$ by a fixed sequence of single-step moves.
No bead in a negative position other than $-1$ is moved, by Proposition~\ref{prop:beadsfromabove}.
If the bead labelled $d$ in position $-1$ is moved, then to avoid an odd gap,
the beads $b$ and $b'$ must also be moved. Raising $d$ back to position $-1$
and raising all the beads in positive positions in $B$ up one row now gives an abacus
representing an even partition of smaller $p$-weight.
Hence the odd gaps between the beads on runners $u$ and $u'$ are removed by moving beads
only on these runners. 

Either bead $c$ or bead $c'$ is the lowest bead in $B$.
If $c$ is the lowest then~$c$ 
must have been moved down at least $t'$ steps from its position in $A$, for otherwise
there are $t+t'$ beads in at most $t+t'-1$ rows of $B$, and so some
row is of the form 
\begin{center}
\begin{tikzpicture}[scale=1, x=0.5cm, y=0.5cm, every node/.style={transform shape}]
\tikzstyle{empty}=[draw, circle, minimum size=5pt, inner sep=0mm]
\tikzstyle{nero} =[draw, circle, minimum size=4pt, inner sep=0mm, fill]
\draw (-0.5,0.5)--(14.5,0.5);
 \node[yshift=15pt] at (0,0) {$\scriptstyle 0$};
\node at (2,1) {$\cdots$}; 
 \node[yshift=15pt] at (4,0) {$\scriptstyle u$};
\node[xshift=3pt,yshift=15pt] at (5,0) {$\scriptstyle u+1$};
\node at (7,1) {$\cdots$}; 
 \node[xshift=-3pt,yshift=15pt] at (9,0) {$\scriptstyle u'-1$};
\node[yshift=15pt] at (10,0) {$\scriptstyle u'$};
\node at (12,1) {$\cdots$};
\node[yshift=15pt] at (14,0) {$\scriptstyle p-1$};
\node at (0,0) [empty] {};
\node at (2,0) {$\cdots$};
\node at (4,0) [nero] {};
\node at (5,0) [empty] {};
\node at (7,0) {$\cdots$};
\node at (9,0) [empty] {};
\node at (10,0) [nero] {};
\node at (12,0) {$\cdots$};
\node at (14,0) [empty] {};
\end{tikzpicture}
\end{center}
in which the beads on runners $u$ and $u'$ form an odd gap, a contradiction.
By induction $(t-1)t'$ moves are necessary to remove all odd gaps from
the abacus obtained from $A$ by deleting $c$. Hence at least $(t-1)t' + t' = tt'$
moves were made to reach $B$ from $A$. The proof is similar if $c'$ is the lowest bead in $B$.

In case (ii) a similar argument applies. Since the bead in position $-1$ is on runner $p-1$,
it follows immediately from Proposition~\ref{prop:beadsfromabove} that only beads on runner
$u$ and runner $p-1$ are moved.
\end{proof}

\section{Even partitions and blocks of $p$-weight $2$}

Let $\gamma$ be a $p$-core and 
let $\nu \in B(\gamma, 2)$. Thus $\gamma$ is obtained from $\nu$
by removing two rim $p$-hooks. In \cite[page 397]{Richards}, 
Richards defines $\delta(\nu)$ to be the absolute
value of the difference of the leg-lengths of these rim $p$-hooks. (This difference is independent
of the choice of rim $p$-hooks.) A partition~$\nu$ such that $\delta(\nu) = 0$ either
has two rim $p$-hooks, or a unique rim $p$-hook and a rim $2p$-hook. In the former
case $\nu$ is \emph{black} if the larger leg-length is even, 
and \emph{white} if it is odd. In the latter case
$\nu$ is \emph{black} if the leg-length of the rim $2p$-hook is congruent to $0$ or $3$ modulo $4$,
and otherwise \emph{white}. 
If $\nu$ is $p$-regular,
let $\nu^{\circ}$ denote the largest partition such that $\nu^\circ \unlhd\hskip1pt \nu$
and $\delta(\nu^\circ) = \delta(\nu)$, and if $\delta(\nu) = 0$,
such that $\nu$ and $\nu^\circ$ have the same colour. (Thus $\nu^\circ = \nu^{\diamond'}$ in
Richards' notation.)

The main result of \cite{Richards} can be stated as follows.

\begin{teo}[Theorem 4.4 in \protect{\cite{Richards}}]\label{teo:Richards}
Let $\gamma$ be a $p$-core and let $\nu \in B(\gamma,2)$ be $p$-regular.
Then $d_{\nu \nu} = d_{\nu^\circ \nu} = 1$. If $\mu \in B(\gamma,2)$
and $\mu\not\in \{\nu,\nu^\circ\}$
then $d_{\mu \nu} = 1$ if $\nu^\circ \lhd \mu \lhd \nu$ and
 $\delta(\nu) - \delta(\mu) \in \{ 1,-1\}$; otherwise $d_{\mu\nu} = 0$.
\end{teo}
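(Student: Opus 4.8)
The plan is to reconstruct Richards' argument, whose three moving parts are the abacus combinatorics of weight-two blocks, the Jantzen--Schaper formula used as an upper bound, and a rigidity step that forces this bound to be attained. First I would encode $B(\gamma,2)$ on the abacus of Section~3. Relative to a fixed abacus for $\gamma$, every $\nu\in B(\gamma,2)$ arises either by moving two distinct beads down one step each or by moving a single bead down two steps; in either case the two removable rim $p$-hooks, and hence $\delta(\nu)$ together with its colour when $\delta(\nu)=0$, can be read directly off the bead positions, and the dominance order $\unlhd$ restricted to the block becomes a transparent combinatorial order on these configurations. This description makes $\nu^\circ$ explicit as the configuration obtained by raising the lower active bead as far as the constraints $\delta(\nu^\circ)=\delta(\nu)$ and equal colour allow, and it shows that the dominance interval $\{\mu : \nu^\circ\unlhd\mu\unlhd\nu\}$ is linearly ordered with $\delta$ changing by exactly $\pm1$ at consecutive covers. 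Invoking the Scopes equivalence between weight-two blocks whose cores differ by sliding a run of beads one space, I would reduce to a bounded family of cores so that all remaining verifications are finite.

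Second, I would run the Jantzen--Schaper formula inductively down the dominance order inside the block. Because the weight is two, the alternating sum over a $p$-hook removal followed by a $p$-hook addition has only a handful of surviving terms and the relevant $p$-adic valuations are all $0$ or $1$; a case analysis on the bead positions then shows that the resulting Jantzen--Schaper coefficient bounding $d_{\mu\nu}$ equals $1$ exactly when $\nu^\circ\lhd\mu\lhd\nu$ and $\delta(\nu)-\delta(\mu)\in\{1,-1\}$, and equals $0$ otherwise. Since the top Jantzen layer $S^\mu/J^1$ is $D^\mu$, for $\mu\neq\nu$ the whole multiplicity $d_{\mu\nu}$ sits below the top layer and is therefore dominated by this coefficient. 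This step already delivers $d_{\nu\nu}=1$, the uniform bound $d_{\mu\nu}\le1$, and the vanishing of $d_{\mu\nu}$ outside the support named in the statement.

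The hard part will be the third step: upgrading each surviving upper bound of $1$ to an equality, that is, exhibiting $D^\nu$ as a genuine composition factor of $S^\mu$ for every $\mu$ in the claimed support, $\mu=\nu^\circ$ included. The Jantzen--Schaper estimate is one-directional and never certifies nonvanishing, so one must separately exclude the characteristic-$p$ possibility of an accidental zero. I would attack this by determining the Cartan invariants of the block---equivalently the characters of the projective indecomposables $P^\nu$---independently, and then using the relation $C=D^{\mathrm{T}}D$ together with the already-known diagonal to force each candidate entry to be positive, hence exactly $1$; the projective data can itself be built up inductively from the bounded family of base cores supplied by the Scopes reduction, with Carter--Payne-type homomorphisms between Specht modules supplying nonzero maps that realise the $\delta$-adjacent covering steps. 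This interlocking induction, in which the generic tightness of Jantzen--Schaper is anchored by a finite set of explicitly checked base cases, is the delicate heart of the proof.
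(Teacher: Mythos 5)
A preliminary but essential observation: the paper contains no proof of this statement. It is quoted verbatim from Richards \cite[Theorem 4.4]{Richards} and used as a black box (in the weight-two case of Theorem~\ref{teo:lowweight} and in Proposition~\ref{prop:Scott}), so there is no internal argument to compare yours against; what follows assesses your reconstruction on its own terms. Your first two steps do track the broad shape of Richards' actual strategy --- abacus encoding of weight-two partitions, the invariants $\delta$ and colour, Scopes reduction to a finite family of cores, and the Jantzen--Schaper sum formula as the computational engine --- though your incidental claim that the dominance interval between $\nu^\circ$ and $\nu$ is linearly ordered is false for a general weight-two block (only the special subset $\mathcal{P}$ of Lemma~\ref{lemma:chains} is a chain, and that is for one particular core).

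The genuine gap is in your third step, and it comes from a misreading of what the sum formula delivers. The Schaper formula computes the full sum $\sum_{i\ge 1}[S^\mu(i):D^\nu]$ over the Jantzen filtration $S^\mu=S^\mu(0)\supseteq S^\mu(1)\supseteq\cdots$; since the layers are nested, the multiplicities $[S^\mu(i):D^\nu]$ are non-increasing in $i$, so if the computed sum equals $1$ then $[S^\mu(1):D^\nu]=1$, and for $\mu\ne\nu$ this already gives $d_{\mu\nu}=[S^\mu(1):D^\nu]=1$ because $S^\mu(0)/S^\mu(1)\cong D^\mu$. Thus whenever your step two yields the bound $1$, the matching lower bound is automatic, and the entire apparatus of your step three is unnecessary. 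Conversely, the real difficulty --- which your plan does not engage --- lies in the pairs where the Schaper sum is not $0$ or $1$: there one must still show $d_{\mu\nu}\le 1$ and decide between $0$ and $1$, and this is precisely where the black/white colour invariant (which selects $\nu^\circ$ among the partitions below $\nu$ with the same $\delta$) and the induction through Scopes-equivalent blocks do the real work. The machinery you propose in its place is either circular or insufficient as described: determining the Cartan matrix ``independently'' and then solving $C=D^{\mathrm{T}}D$ presupposes essentially the information being sought, and a nonzero Carter--Payne homomorphism $S^\mu\to S^\lambda$ certifies a common composition factor only once one already controls the relevant heads and socles. So the proposal locates the delicate heart of the proof in the wrong place and fills it with tools that would not close the argument.
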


In the proof of Theorem~\ref{teo:lowweight}(iii) we also need part (ii) of
the following lemma. In the proof we use that the leg-length
of a rim $p$-hook
corresponding to a single-step upward move of a bead $b$ 
in position $\beta$ on an abacus 
is the number of beads in positions $\beta-(p-1),\ldots, \beta-1$.

\begin{lemma}\label{lemma:delta0}
Let $\gamma$ be an even $p$-core represented by an abacus $A$ and let $\nu \in B(\gamma,2)$.
\begin{thmlist}
\item If $\nu$ is even then $\nu$ is obtained
either by single-step moves of two adjacent beads in $A$, or by two single-step moves
of the final bead in~$A$. 
\item If $\nu$ is even then $\delta(\nu) = 0$.
\end{thmlist}
\end{lemma}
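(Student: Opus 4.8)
The plan is to recast the preceding Lemma in terms of parity: an abacus represents an even partition exactly when it has no odd gaps, which is to say that consecutive beads always occupy positions of \emph{opposite} parity. The one elementary fact I would record first is that a single-step move carries a bead from a position $\beta$ to the position $\beta+p$, and since $p$ is odd this \emph{reverses} the parity of the bead's position and moves it strictly to the right. Because $\gamma$ is a $p$-core, every runner of $A$ is filled from the top, so the net effect of the two single-step moves producing an abacus for $\nu$ is of exactly one of two kinds: either a single bead is pushed down two rows (remove a bead at some $\beta$, insert a bead at $\beta+2p$), or two distinct beads are each pushed down one row (remove beads at $\beta_1<\beta_2$, insert beads at $\beta_1+p$ and $\beta_2+p$). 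If $\beta_2=\beta_1+p$ with nothing between, the insertion at $\beta_1+p=\beta_2$ cancels the removal there and we fall back into the one-bead case.

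For part (i) I would argue by examining the removal sites. If a bead at a position $\beta$ that is \emph{not} the final bead of $A$ is removed, its immediate predecessor $\beta^-$ and successor $\beta^+$ become adjacent; since $\beta-\beta^-$ and $\beta^+-\beta$ are both odd (as $\gamma$ is even), the merged gap from $\beta^-$ to $\beta^+$ has even length and is therefore an odd gap. A reinserted bead cannot repair this, because every inserted position ($\beta+2p$, or $\beta_1+p$, $\beta_2+p$) lies strictly to the right of the bead it came from, and any bead dropped inside a gap merely splits that gap into two pieces of the wrong parity (again by the parity reversal of a single step). Consequently, in the one-bead case the removed bead must be the final bead of $A$, which is form (2); and in the two-bead case no removal may leave a surviving odd gap, which forces $\beta_1$ and $\beta_2$ to be adjacent beads of $A$ (when they are adjacent, removing both merges $\beta_1^-,\beta_1,\beta_2,\beta_2^+$ across three odd steps and so leaves an even gap), which is form (1). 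This is exactly the stated dichotomy. The main obstacle here is handling these interactions cleanly in all boundary cases — ruling out every non-adjacent pair, and correctly reducing the same-runner configuration to the one-bead analysis.

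For part (ii) I would invoke the leg-length description recalled just before the lemma: the leg-length of the rim $p$-hook stripped by moving a bead at position $q$ up one row is the number of beads in positions $q-(p-1),\dots,q-1$. In the final-bead case of part (i) there are no beads in any position above the final bead, so both rim $p$-hooks removed from $\nu$ have leg-length $0$ and $\delta(\nu)=0$ immediately. In the two-adjacent-beads case, write the two moved beads of $\nu$ as sitting at $\alpha+p$ and $\beta+p$, where $\alpha<\beta$ are adjacent in $A$ with $\beta-\alpha$ odd and positions $\alpha+1,\dots,\beta-1$ empty in $A$. I would strip the hook at $\beta+p$ first and then the hook at $\alpha+p$ in the \emph{updated} abacus, and compare the two bead-counts. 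Stripping the first hook deposits a bead into position $\beta$, and this relocation exactly compensates the rightward shift of the counting window by $\beta-\alpha$, so that the windows $\{\beta+1,\dots,\beta+p-1\}$ and $\{\alpha+1,\dots,\alpha+p-1\}$ contain the same number of beads; hence the two leg-lengths agree and $\delta(\nu)=0$. I expect this window comparison — verifying that the deposited bead precisely accounts for the shift, using the adjacency of $\alpha,\beta$ and the oddness of $\beta-\alpha$ — to be the one genuinely computational step.
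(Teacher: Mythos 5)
Your proposal follows essentially the same route as the paper: part (i) by showing that removing a bead which is neither the final bead nor one of a moved adjacent pair leaves an odd gap between its neighbours that the reinserted beads cannot repair, and part (ii) by stripping the two rim $p$-hooks in succession and matching the two bead-count windows, with the deposited bead compensating for the shift of the window. The one point to repair is your justification in the one-bead case of (i): you assert that a bead dropped inside the gap between $\beta^-$ and $\beta^+$ splits it into two pieces of the wrong parity ``by the parity reversal of a single step'', but the insertion there is at $\beta+2p$, a \emph{double} step, which preserves parity; if $\beta+2p$ could land strictly between $\beta^-$ and $\beta^+$ it would in fact split the odd gap into two even gaps and your contradiction would evaporate. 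The case is vacuous, but for a different reason: since $\gamma$ is a $p$-core, no bead has a space directly above it, so the successor satisfies $\beta^+ \le \beta + p < \beta+2p$; hence the reinserted bead lands beyond $\beta^+$ and the odd gap from $\beta^-$ to $\beta^+$ simply survives untouched. (The same inequality is what guarantees, in your two-bead analysis and in the window comparison of part (ii), that the positions $\alpha+p+1,\dots,\beta+p-1$ below the empty positions $\alpha+1,\dots,\beta-1$ are themselves empty.) With that observation inserted, your argument is complete and agrees with the paper's.
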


\begin{proof}
Suppose bead $b$ on $A$ is one of the beads moved to obtain an abacus representing $\nu$. 
If there is a bead, say $c$, after $b$, then
a new odd gap is created between the bead before $b$ in the abacus and bead $c$.
Hence one of these beads must be moved. This proves~(i).
If the final bead is moved twice then the leg-length of the corresponding rim $2p$-hook
is $0$, so $\delta(\nu) = 0$. Otherwise  adjacent beads $b$ and $b'$
are moved, where $b$ and $b'$ are in positions $\beta$ and $\beta'$ with $\beta < \beta'$.
Let $\mu$ be obtained from $\nu$ 
by removing the rim $p$-hook corresponding to $b'$.
By
the observation before the lemma,
 the leg-lengths of the rim $p$-hooks in $\nu$ and $\mu$ 
corresponding to bead $b'$ and $b$, respectively, are
equal to the number of beads in $A$ in positions $\{\beta, \ldots, \beta+(p-1)\}$.
Hence $\delta(\nu) = 0$, as required for (ii).
\end{proof}

It is possible to sharpen Lemma~\ref{lemma:delta0}(ii) so that the converse also holds.
For this we need one more statistic. If $\nu \in B(\gamma,2)$ 
is obtained by single-step moves of 
beads $b$ and $b'$ on an abacus representing $\gamma$, let $\Delta(\nu)$ be
the number of beads strictly between $b$ and $b'$. (This is clearly independent of the choice
of abacus.) If $\nu$ has a rim $2p$-hook
then let $\Delta(\nu) = 0$. The proof of Lemma~\ref{lemma:delta0} shows that
if $\Delta(\nu) = 0$ then $\delta(\nu) = 0$.

\begin{proposition}\label{prop:Delta0}
Let $\gamma$ be an even $p$-core and let $\nu \in B(\gamma,2)$. Then $\nu$ is even
if and only if $\Delta(\nu) = 0$ and $\nu$ is black.
\end{proposition}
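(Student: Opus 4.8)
The plan is to reduce to the case $\delta(\nu) = 0$ and then run a parity analysis on the abacus. First I would observe that both sides of the claimed equivalence force $\delta(\nu) = 0$: if $\nu$ is even then $\delta(\nu) = 0$ by Lemma~\ref{lemma:delta0}(ii), while the colour of $\nu$ (and hence the property of being black) is defined only when $\delta(\nu) = 0$. Therefore whenever $\delta(\nu) \neq 0$ both ``$\nu$ is even'' and ``$\Delta(\nu) = 0$ and $\nu$ is black'' fail, and the equivalence holds vacuously. So I may assume $\delta(\nu) = 0$, and by Richards' dichotomy recalled before Theorem~\ref{teo:Richards} the partition $\nu$ is obtained from an abacus $A$ for $\gamma$ either by single-step moves of two distinct beads $b$ and $b'$ (the two rim $p$-hooks case) or by moving a single bead down two rows (the rim $2p$-hook case).

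For the forward implication I would appeal directly to Lemma~\ref{lemma:delta0}(i): if $\nu$ is even then either two adjacent beads of $A$ are moved, in which case there are no beads strictly between them and so $\Delta(\nu) = 0$, or the final bead of $A$ is moved twice, which is the rim $2p$-hook case and gives $\Delta(\nu) = 0$ by definition. It then remains to check that $\nu$ is black. In the rim $2p$-hook subcase the leg-length of the $2p$-hook is $0 \equiv 0 \pmod{4}$, so $\nu$ is black. In the two-bead subcase $\delta(\nu) = 0$ means the two rim $p$-hooks share a common leg-length $\ell$, and I would show $\ell$ is even, using the observation before Lemma~\ref{lemma:delta0} that $\ell$ equals the number of beads of $A$ in a window of $p-1$ consecutive positions, together with the fact that $\gamma$ has no odd gaps.

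The substance of the proof is the converse. Here it is convenient to reformulate evenness via the criterion that an abacus represents an even partition if and only if it has no odd gaps: equivalently, the positions of consecutive beads always differ by an odd amount, so they alternate in parity. Since a single-step move shifts a bead by $p$, an odd number, each moved bead changes the parity of its own position, and the task is to track how the two moves affect the parities of the gaps they disturb. In the two-bead case I would take $b$ and $b'$ adjacent (forced by $\Delta(\nu) = 0$) with an even gap between them (forced by $\gamma$ being even), and show that the number of odd gaps created when both beads drop one row has the same parity as the common leg-length $\ell$; blackness, i.e. $\ell$ even, then yields that no odd gap survives, so $\nu$ is even. In the rim $2p$-hook case I would use that, because the bead passes through the vacated middle position, the leg-length $L$ of the $2p$-hook equals $\ell_1 + \ell_2$, where $\ell_1, \ell_2$ are the leg-lengths of the two single steps; since $\delta(\nu) = 0$ gives $\ell_1 = \ell_2$, we obtain $L = 2\ell_1$, so $L$ is even and $\nu$ is black exactly when $L \equiv 0 \pmod{4}$, that is, when $\ell_1$ is even. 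The remaining step is to show that over an even core this parity condition on $\ell_1$ is equivalent to $\nu$ being even.

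I expect the main obstacle to be exactly this last parity bookkeeping: correctly counting, in terms of the leg-length, how many odd gaps a pair of single-step moves introduces or removes on an abacus that initially has none, and in particular reconciling the $\bmod\ 4$ colouring condition of the rim $2p$-hook case with evenness. Organising this cleanly will require a case analysis according to the relative positions of the moved beads and the beads in the intervening windows of length $p-1$, much as in the proof of Proposition~\ref{prop:beadsfromabove}. To avoid duplicating the argument across the two cases I would try to isolate a single lemma computing the parity of the change in the number of odd gaps under one single-step move.
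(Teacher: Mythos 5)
Your reduction to $\delta(\nu)=0$ and your forward direction (via Lemma~\ref{lemma:delta0}(i), plus the observation that a rim $2p$-hook arising from the final bead has leg-length $0$) are sound and match the paper's own split into the two-bead and one-bead cases. But the heart of the proposition is the converse, and there the one concrete strategy you commit to does not work as stated. You propose to show that ``the number of odd gaps created when both beads drop one row has the same parity as $\ell$'' and to conclude from $\ell$ even that ``no odd gap survives''. An even number of odd gaps need not be zero: a priori two odd gaps could be created, one on each side of the moved pair, and a parity count of the total cannot rule this out. What is actually true, and what the paper proves, is sharper and pointwise: after the move exactly two gaps are at risk, namely the gap between $b$ and the first bead $e$ preceding position $\beta+p$ in the new abacus, and the gap between $b'$ and the first bead $a'$ after position $\beta'+p$; the paper shows these two gaps always have \emph{equal} parity, and that this common parity is even precisely when $\ell$ is even, i.e.\ precisely when $\nu$ is black. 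That pointwise control is what makes both directions of ``even $\Leftrightarrow$ black'' fall out simultaneously, and it also supplies your forward-direction claim that $\ell$ is even, which you likewise left as ``I would show''.

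The rim $2p$-hook converse has the same problem in a different guise. Your identity $L=\ell_1+\ell_2=2\ell_1$ is correct, but the ``remaining step'' --- that $\ell_1$ even is equivalent to $\nu$ even --- is exactly where the work lies, and it is not a pure parity statement: the paper shows that if $\nu$ is black then there can be \emph{no} bead after position $\beta$ at all (a contradiction argument using $\delta(\nu)=0$ to pair positions $\beta+j$ and $\beta+j+p$, together with the absence of odd gaps in the abacus for $\gamma$), so that $b$ is forced to be the final bead and $\ell_1=0$, not merely even. In summary, your outline identifies the right case division and the right auxiliary facts, but the two deferred steps constitute essentially the entire content of the proposition, and the one step you did sketch in detail would fail if carried out literally.
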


\begin{proof}
Let $A$ be an abacus representing $\gamma$. Let $\nu$ be a partition obtained from $A$
by two single-step moves, represented by the abacus $B$. We consider two cases.

\subsubsection*{Case 1: Distinct beads}
Suppose that distinct beads $b$ and $b'$ are moved to obtain~$B$.
Let $b$ and $b'$ be in positions $\beta$ and $\beta'$ of $A$, respectively, where $\beta < \beta'$.
If $\nu$ is even then, by Lemma~\ref{lemma:delta0}(i),  $b$ and $b'$ are adjacent and so $\Delta(\nu) = 0$.
Conversely, if $b$ and $b'$ are adjacent then $\Delta(\nu) = 0$. It remains to
show that, if $b$ and $b'$ are adjacent, then $\nu$ is even
if and only if $\nu$ is black.

The rim $p$-hook in $\nu$ with the longer leg-length 
corresponds to bead $b'$ in~$B$. Its leg-length, $\ell$ say, is 
the number of beads in positions
$\beta'+1,\ldots,\beta+p$ of~$B$. Let $a$ be the first bead before
position $\beta$ in $A$. 
Let $e$ be the first bead before position $\beta+p$ in $B$
and let $a'$ be the first bead after position $\beta'+p$ in $B$; if there
is no such bead let $a' = b'$.

If bead $e$ is bead $a$ then there are no beads except for $b$ and $b'$ between $a$
and $a'$ in $B$. In this case $\ell = 1$ and so $\nu$ is white, and, because
of the gap in~$B$ between beads $a$ and $b$, $\nu$ is not even.
In the remaining case bead~$e$
is in a position $\epsilon$ strictly between $\beta'$ and $\beta+p$. By
inserting initial beads if necessary we may assume that bead $e$ is on runner $0$,
and so the relevant rows of $A$ are as shown in the abacus below, where arrows indicate the 
gaps in $A$ and $B$ affected by the  moves of $b$ and $b'$.
\begin{center}
\begin{tikzpicture}[scale=1, x=0.5cm, y=0.5cm, every node/.style={transform shape}]
\tikzstyle{empty}=[draw, circle, minimum size=5pt, inner sep=0mm]
\tikzstyle{nero} =[draw, circle, minimum size=4pt, inner sep=0mm, fill]

\def\a{3}\def\b{7}\def\bp{11}\def\ap{14}
\def\t{0.3}
\def\yu{1.5}
\def\yb{-1.5}

\draw (-0.5,\yu)--(\ap+2.7,\yu);

\draw (0,\yu)--(0,\yb);
\draw (\a,\yu)--(\a,\yb);
\draw (\b,\yu)--(\b,\yb);
\draw (\bp,\yu)--(\bp,\yb);
\draw (\ap,\yu)--(\ap,\yb);

\node at (0,0) [nero] {};
\node at (0,-1) [nero] {};
\node at (\a,0) [nero] {};
\node at (\a,-1) [emptyn] {};
\node at (\b,0) [nero] {};
\node at (\b,-1) [emptyn] {};
\node at (\bp,0) [nero] {};
\node at (\bp,-1) [emptyn] {};
\node at (\ap,0) [nero] {};
\node at (\ap,-1) [nero] {};

\draw[arrows={triangle 45-triangle 45}] (\a+\t,0)--(\b-\t,0);
\draw[arrows={triangle 45-triangle 45}] (\b+\t,0)--(\bp-\t,0);
\draw[arrows={triangle 45-triangle 45}] (0+\t,-1)--(\b-\t,-1);
\draw[arrows={triangle 45-triangle 45}] (\bp+\t,-1)--(\ap-\t,-1);

\node at (\ap+1.95,0) {$\ldots$};
\node at (\ap+1.95,-1) {$\ldots$};


\node[left] at (0,-1) {$e$};
\node[left] at (\a,3pt) {$a$};
\node[above,right] at (\b,9pt) {$b$};
\node[right] at (\bp,3pt) {$b'$};
\node[right] at (\ap,-1) {\raisebox{3.5pt}{$a'$}};

\end{tikzpicture}
\end{center}
Observe
that between positions~$\beta$ and $\epsilon$ (inclusive) in $A$ there are $\ell+1$ beads.
Since $A$ represents an even partition, all gaps between these beads are even. Hence $\ell$ and
$\epsilon - \beta$ have the same parity and $\ell$ and $\epsilon - \beta + p$ have opposite parity.
Therefore the gap between beads $e$ and $b$ in $B$ is even if and only if 
$\ell$ is even, so if and only if $\nu$ is black.
If there is no bead after position $\beta+p$ in $B$ then it is now clear that $\nu$ is even
if and only if~$\nu$ is black. Suppose that bead $a'$
is the first bead after position $\beta + p$ in $B$. Let~$a'$ be in position~$\alpha'$.
Since $A$ has a gap between positions $\epsilon$ and
$\alpha'$, we see that
\[ \alpha' - \epsilon = (\alpha' - (\beta'+p)) + (\beta'-\beta) + \bigl( (\beta+p) - \epsilon) \bigr) \]
is odd. Hence $\alpha' - (\beta'+p)$ and $(\beta+p)-\epsilon$ have the same parity.
Thus the gaps between beads $b$ and $e$ and beads $b'$ and $a'$ in $B$
have the same parity. So again we have that $\nu$ is even if and only if $\nu$ is black.

\subsubsection*{Case 2: One bead} Suppose that bead $b$ is moved twice. If 
$\nu$ is even then, by Lemma~\ref{lemma:delta0}(i), bead $b$ is the final bead in $A$,
the leg-length of the rim $2p$-hook is $0$ and $\Delta(\nu) = 0$ and $\nu$ is black.

Conversely, suppose that $\nu$ is black. Let
$b$ be in position $\beta$ of $A$. Since $\delta(\nu) = 0$,
for each $j$ such that $1 \le j \le p-1$,
either there are beads in both positions $\beta+j$ and $\beta+j+p$,
or spaces in both these positions. Hence the leg-length of the rim $2e$-hook
corresponding to bead $b$ is even, and so is congruent to $0$ modulo $4$.
The number of beads in positions $\beta+1,\ldots, \beta+(p-1)$ of~$A$ is therefore even. 

Suppose, for a contradiction, that there is a bead after position $\beta$.
Let $e$ be the first bead after position $\beta$, and suppose that $e$ is in position $\epsilon$.
The positions $\beta, \ldots, \epsilon + p$ in $A$ begin and end with beads.
So we have evenly many beads in $A$ in 
\[ \bigl( (\epsilon+p)-\epsilon \bigr) + \bigl( \epsilon -\beta \bigr) + 1 \]
positions. Since $\epsilon - \beta$ is odd, this number is odd. Therefore
two of the beads in these positions
form an odd gap, a contradiction.  Hence $b$ is the final bead in $A$ and
$\nu$ is even.
\end{proof}

\section{Proof of Theorem~\ref{teo:vertices}}

In this section we completely characterize the vertices of all the indecomposable summands of $H^{(2^n)}$.
The following lemma is required.

\begin{lemma}\label{lemma:num}
Let $n \in \N$. There exist $k$, $\ell \in \N_0$ such that $\ell \le (k+1)(p-1)/2$ and
\[ \tag{$\star$} 2n = (k+1)\bigl(2+\frac{p-1}{2}k\bigr) + 2\ell. \]
\end{lemma}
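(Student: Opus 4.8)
The plan is to regard the right-hand side of $(\star)$ as a function of the two parameters and to show that, as $k$ and $\ell$ range over their permitted values, it attains every positive even integer. Set $m = (p-1)/2$, a positive integer since $p$ is an odd prime, so that $(\star)$ reads $2n = (k+1)(2+mk) + 2\ell$ with $0 \le \ell \le (k+1)m$. For a fixed $k \in \N_0$, as $\ell$ runs through $0, 1, \ldots, (k+1)m$ the quantity $(k+1)(2+mk) + 2\ell$ increases in steps of $2$ from its value $L_k := (k+1)(2+mk)$ at $\ell = 0$ to its value $U_k := (k+1)(2+mk) + 2(k+1)m = (k+1)\bigl(2 + m(k+2)\bigr)$ at $\ell = (k+1)m$. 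Writing $L_k = 2(k+1) + mk(k+1)$ and noting that $k(k+1)$ is even, we see that $L_k$ is even; hence the values attained for this $k$ are exactly the even integers in the interval $[L_k, U_k]$. It therefore suffices to prove that these intervals of even integers, taken over all $k \in \N_0$, cover $\{2, 4, 6, \ldots\}$.

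First I would record the starting point $L_0 = 1\cdot(2+0) = 2$, matching the smallest case $2n = 2$. The only real computation is then the endpoint identity
\[ L_{k+1} = U_k + 2 \qquad \text{for all } k \in \N_0, \]
which follows by expanding $U_k = 2(k+1) + m(k+1)(k+2)$ and $L_{k+1} = 2(k+2) + m(k+1)(k+2)$ and subtracting, the quadratic terms cancelling to leave $L_{k+1} - U_k = 2(k+2) - 2(k+1) = 2$. Because the even values produced for $k$ and for $k+1$ are consecutive runs of even integers and $U_k$, $L_{k+1}$ are themselves even with $L_{k+1} = U_k + 2$, the block for $k+1$ begins at precisely the next even integer after the block for $k$, leaving no gap and no overlap.

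Putting these together, the even integers supplied by the blocks $k = 0, 1, 2, \ldots$ are exactly $2, 4, 6, \ldots$ in order. Consequently, for every $n \in \N$ the even integer $2n$ lies in a unique block $[L_k, U_k]$, and so $2n = (k+1)(2+mk) + 2\ell$ for the corresponding $k$ and some $\ell$ with $0 \le \ell \le (k+1)m$, which is exactly $(\star)$. I do not anticipate any genuine obstacle: once the endpoint identity $L_{k+1} = U_k + 2$ is in hand the rest is bookkeeping, and that identity is a one-line expansion.
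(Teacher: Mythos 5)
Your proof is correct and is essentially the paper's argument: the paper sets $\theta_k = (k+1)(2+\frac{p-1}{2}k)$, picks $k$ with $\theta_k \le 2n < \theta_{k+1}$, and uses $\theta_{k+1}-\theta_k = 2+(k+1)(p-1)$ to bound $\ell$, which is exactly your endpoint identity $L_{k+1}=U_k+2$ phrased as an interval-tiling statement. No substantive difference.
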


\begin{proof}
For $k \in \N_0$ define 
$\theta_k = (k+1)\bigl( 2 + \frac{p-1}{2}k \bigr)$. Note that $\theta_k$
is even. Choose $k$ so that
$\theta_k \le 2n < \theta_{k+1}$ and then define $\ell$ so that ($\star$) holds.
Since  $\theta_{k+1} - \theta_{k} = 2 + (k+1)(p-1)$, we have $2\ell \le (k+1)(p-1)$, as required.
\end{proof}

\begin{proof}[Proof of Theorem~\ref{teo:vertices}]
By Proposition~\ref{prop:Green} it is sufficient to prove that every Foulkes module has a 
projective summand. In turn, by Proposition~\ref{prop:proj}(i), it is sufficient
to prove that for all $n$ there is an
even partition $2\lambda$ of $2n$ with $p$-core $\gamma$ such that 
$2n = |\gamma| + w(\gamma)p$, or, equivalently, such that
$2\lambda \in \mathcal{E}(\gamma)$.

We now construct such partitions. Let $k$ and $\ell$ be as in Lemma~\ref{lemma:num}.
Define
\[ \lambda_{k,0}=\bigl( 2+k(p-1),2+(k-1)(p-1), 
\ldots,2+(p-1),2\bigr). \]
Note that $\lambda_{k,0}$ is an even $p$-core partition of $(k+1)(2 + (p-1)k/2)$, represented
by an abacus having $k+1$ beads on runner $2$ and no beads on any other runner. Define
\[ \lambda_{k,\ell}=\lambda_{k,0}+\bigl( (2(s+1))^{r},(2s)^{k+1-r} \bigr) \]
where $s$ and $r$ are the unique natural numbers such that 
$0\leq r<k+1$, $0\leq s\leq \frac{p-1}{2}$ and
$\ell=(k+1)s+r$. Note 
that $\lambda_{k,\ell}$ is an even partition of $2n$. Let $\gamma$ be the $p$-core of $\lambda_{k,\ell}$.
We consider three cases.

\begin{itemize}
\item[(1)] If $r=0$ and $0 \le s < \frac{p-1}{2}$ then $\lambda_{k,\ell}$ is a $p$-core
partition represented by an abacus with $k+1$ beads on runner $2s+2$. Therefore $\lambda_{k,\ell}
= \gamma \in \mathcal{E}(\gamma)$.

\item[(2)] If $r=0$ and $s = \frac{p-1}{2}$ then $\lambda_{k,\ell}$ is represented by an abacus
with $k+1$ beads on runner $1$, in rows $1, \ldots, k+1$, and no beads on
any other runner. Now $\lambda_{k,\ell}$
is obtained from its $p$-core
$\gamma$ by the sequence of bead moves specified in Proposition~\ref{prop:core}(ii),
taking $t = k+1$, $t'=0$ and $u=1$. Therefore $\lambda_{k,\ell} \in \mathcal{E}(\gamma)$.

\item[(3)] If $r > 0$ then  $(k+1)s+r = \ell \le (k+1)(p-1)/2$ implies that $s < \frac{p-1}{2}$.
\begin{itemize}
\item[(a)] If $2s+2 < p-1$
then $\lambda_{k,\ell}$ is represented by an abacus with $k+1-r$ beads on runner $2s+2$
in rows $0$, \ldots, $k-r$ and $r$ beads on runner $2s+4$ in rows $k+1-r$, \ldots, $k$,
Now $\lambda_{k,\ell}$ is obtained from its $p$-core
$\gamma$ by the sequence of bead moves specified in
Proposition~\ref{prop:core}(i), taking $t= k+1-r$, $t' = r$, $u = 2s+2$ and $u' = 2s+4$.
Therefore $\lambda_{k,\ell} \in \mathcal{E}(\gamma)$. 
\item[(b)] If $2s+2 = p-1$
then $\lambda_{k,\ell}$ is represented by an abacus with $k+1-r$ beads on runner $p-1$
in rows $0$, \ldots, $k-r$ and $r$ beads on runner $1$ in rows $k+2-r$, \ldots, $k+1$.
Now $\lambda_{k,\ell}$ is obtained from its $p$-core
$\gamma$ by the sequence of bead moves specified in
Proposition~\ref{prop:core}(ii), taking $t= r$, $t' = k+1-r$, $u = p-1$ and $u' = 1$.
Therefore $\lambda_{k,\ell} \in \mathcal{E}(\gamma)$. 
\end{itemize}
\end{itemize}
This completes the proof of Theorem~\ref{teo:vertices}.
\end{proof}

\begin{remark}\label{remark:a>2}
Let $a$, $n \in \N$. Generalizing the Foulkes modules $H^{(2^n)}$ already defined,
let $H^{(a^n)}$ denote the $FS_{an}$-module induced
from the trivial representation of $S_a \wr S_n$. For $t \in \N_0$ let
$P_t$ be a Sylow $p$-subgroup of $S_a \wr S_{tp}$. 
In \cite{Giannelli} it is shown
that if $a < p$ and $U$ is an indecomposable summand
of $H^{(a^n)}$, then there exists $t \in \N_0$ such that $t \le n/p$ and
$P_t$ is a vertex of~$U$. It would be interesting to know if an analogue of Theorem \ref{teo:vertices} holds in this general setting. More precisely, is it true that for every $t\in \N_0$ such that $t \le n/p$, there is an 
indecomposable summand of $H^{(a^n)}$ with vertex $P_t$?
The main obstacle in proving this for arbitrary $a>2$ is the lack of knowledge of the 
ordinary character of the Foulkes module $H^{(a^n)}$.
\end{remark}

\section{Proof of Theorems~\ref{teo:lowweight} and~\ref{teo:maximalvertex}}

Throughout this section let $\gamma$ be a $p$-core. By reduction modulo $p$,
the composition factors of $H^{(2^n)}$ are precisely the composition factors of
the Specht modules $S^{2\lambda}$ for $\lambda$ a partition of $n$. A composition
factor of $S^{2\lambda}$ lies in the block $B(\gamma,w)$ if and only if $2\lambda \in B(\gamma,w)$.

\subsection*{Blocks of $p$-weight $0$}

The unique module in the block $B(\gamma,0)$ is the simple projective Specht module $S^\gamma$.
It is a composition factor of $H^{(2^n)}$ if and only if $\gamma$ is an even $p$-core. In this
case, since $S^\gamma$ is projective, it splits off as a direct summand. This proves
Theorem~\ref{teo:lowweight}(i).

\subsection*{Blocks of $p$-weight $1$}

Suppose that there is a summand in the block $B(\gamma,1)$. Then
an even partition can be obtained from $\gamma$ by adding a single $p$-hook.
Fix an abacus representing $\gamma$.
Suppose the highest odd gap is between
beads $b$ and $b'$ in positions $\beta$ and $\beta'$ where $\beta < \beta'$.
The only single-step moves that can lead to an even partition are moves
of $b$ and~$b'$. If $b'$ has a space below it then moving $b'$ gives
an even partition only if $b$ also has a space below it. A similar argument
applies if $b$ has a space below it. Therefore both $b$ and $b'$ have spaces below them
and a single-step move of either bead gives an even partition. The two possible
configurations are as shown in Figure 4 below.

\begin{figure}[h!]
\begin{center}
\makebox[\textwidth]{
\includegraphics{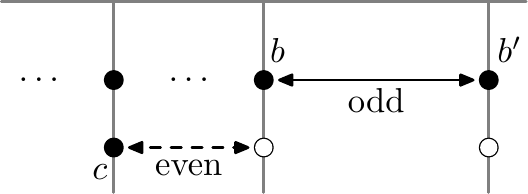}\hspace*{0.5in}
\includegraphics{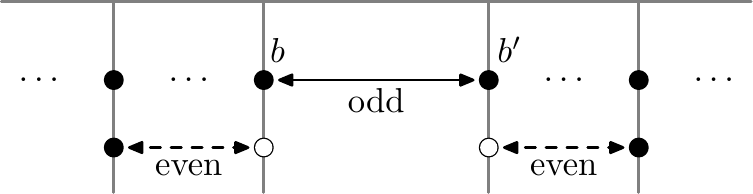}}
\end{center}
\caption{\small A $p$-core $\gamma$ such
that $w(\gamma) = 1$ has an abacus  of one of the forms shown above. 
In the left diagram, there is a single odd gap, bead $b'$ is on runner $p-1$,
 and the bead marked $c$ is the final
bead on the abacus. (It is possible that $c=b'$.) In the right diagram there are two odd gaps.
In each diagram a single-step move of either $b$ or $b'$ gives an even partition.}
\end{figure}

Let $2\lambda$ and $2\lambda'$ be the even partitions obtained by moving $b$
and $b'$, respectively.
Then $\mathcal{E}(\gamma) = \{2\lambda, 2\lambda'\}$.
It now follows from Propositions~\ref{prop:proj}(ii) and~\ref{prop:Cartan}
that $P^{2\lambda'}$ is the unique summand of $H^{(2^n)}$ in $B(\gamma, 1)$.
This completes the proof of Theorem~\ref{teo:lowweight}(iii).

\begin{remark}\label{remark:$p$-weight1}
By \cite[\S 6, Example 1]{Scopes}, $B(\gamma,1)$ is
Morita equivalent to the principal block of $FS_p$ by a Scopes functor.
Since there are no beads between $b$ and $b'$,
the partitions $2\lambda$ and $2\lambda'$ are neighbours in the dominance order
on partitions in $B(\gamma,1)$. Since Scopes functors preserve the dominance order on
partitions, it follows that the Specht factors of $P^{2\lambda'}$ 
are $S^{2\lambda'}$ (at the top) and $S^{2\lambda}$ (at the bottom). 
Thus $P^{2\lambda'}$ has the Loewy structure
\[ \begin{matrix} D^{2\lambda'} \\ D^\nu \oplus D^{2\lambda} \\ D^{2\lambda'}
\end{matrix} \]
where $\nu$ is the smallest partition greater than $2\lambda'$ in the dominance
order on partitions in $B(\gamma,1)$. (If $2\lambda'$ is greatest then omit $D^\nu$.)
\end{remark}

\subsection*{Blocks of $p$-weight $2$}

Suppose that there is a summand in the block $B(\gamma,2)$. Then either
$w(\gamma) = 2$, or $w(\gamma) = 0$ and $\gamma$ is an even $p$-core.

\subsubsection*{Case $w(\gamma) = 2$}
Let $A$ be an abacus for $\gamma$. Suppose that the adjacent beads~$b$ and $b'$ form
the highest odd gap in $A$. Since $\gamma$ is a $p$-core, we can assume (by adding initial beads to $A$) that $b$ and $b'$ lie in the same row $r$. Suppose, for a contradiction, that neither bead $b$
nor $b'$ is moved in a sequence of single-step bead moves leading to an abacus $B$
representing a partition in~$\mathcal{E}(\gamma)$. Then there exists a bead $c$, 
in an earlier position than bead $b$, that is moved into the gap between $b$ and $b'$.
Let $c'$ be the left adjacent bead to~$c$ and let $c''$ be the right adjacent bead to $c$.  
If $c$ is not in row $r-1$, then it is in row $r-2$ and is moved twice. This leaves 
an odd gap between beads $c'$ and~$c''$.
Hence~$c$ lies in row $r-1$. Let $A'$ be the abacus obtained from $A$ by a single-step move of bead $c$.
The odd gap between $c'$ and $c''$ in $A'$
cannot be removed by moving a bead $d$ from row $r-2$, since this creates a new odd
gap between the two beads adjacent to $d$ in $A'$. Hence either $c'$ or $c''$ is moved.
Therefore $B$ has two beads in the positions between $b$ and $b'$, and so there is an odd gap in $B$
involving $b$ or $b'$. 

We have shown that either bead $b$ or bead $b'$ is moved.
Suppose that there is a bead~$c$ immediately below bead $b'$. Then there must be
spaces in the positions immediately below bead $b$ and bead~$c$.
The partitions in $\mathcal{E}(\gamma)$ are obtained as follows:
\begin{itemize}
\item[(i)] two single-step moves
of bead $b$,
\item[(ii)] single-step moves of beads $b$ and $c$, 
\item[(iii)] a single-step move of bead $c$ followed by a single-step move of bead~$b'$. 
\end{itemize}
Hence $|\mathcal{E}(\gamma)| = 3$ and the unique maximal partition in $\mathcal{E}(\gamma)$
is the partition $2\lambda$ given by the moves in~(iii). By
Propositions~\ref{prop:proj}(ii) and~\ref{prop:Cartan}, $P^{2\lambda}$ is the
unique summand of $H^{(2^n)}$ in $B(\gamma, 2)$.
A similar result holds if there is a bead immediately below bead~$b$;
in this case the unique maximal partition is given by two single-step moves of bead $b'$.
(This case occurs in 
the block $B\bigl( (3,1), 2\bigr)$ when $p=3$ in the example in \S 8 below.)

Now suppose there are spaces below beads $b$ and $b'$. 
Let $\mu$ be the partition obtained by a single-step move of bead $b'$, represented
by the abacus $B$. Let $C$ be the abacus obtained by swapping the bead $b$
on $B$ with the space in $B$ in the position occupied by bead $b'$ in $A$.
Let $\gamma^\star$ be the $p$-core represented by $C$.
The abaci $B$ and $C$ have the same pattern of odd and even gaps. Moreover,
a single-step move of bead $b$ on $B$ does not give an even partition, since it 
restores the odd gap between beads $b$ and $b'$ present in $A$. Therefore 
$w(\gamma^\star) = 1$ and there is a bijection
between the sequences of single-step moves on $B$ and on $C$ that give even partitions.
From the $p$-weight one case we have $\mathcal{E}(\gamma^\star) = 2$. Hence exactly two 
even partitions can be obtained by starting with a single-step move of bead $b'$, leaving bead~$b$
fixed.
A similar argument deals with the case where bead $b$ is moved. Therefore
$|\mathcal{E}(\gamma)| = 4$, and by Propositions~\ref{prop:proj}(ii) and~\ref{prop:Cartan},
it follows that if $2\lambda$ is a maximal element of $\mathcal{E}(\gamma)$ then
$P^{2\lambda}$ is the unique summand of $H^{(2^n)}$ in $B(\gamma, 2)$.

\begin{example}
We pause to give an example of the case where $w(\gamma) = 2$
and there are spaces below beads $b$ and $b'$. 
Let $p=5$ and let $\gamma = (5,1,1,1)$.
An abacus $A$ representing $\gamma$, and the abacus $C$ defined in the proof, are shown
below left and below right.

\begin{center}
\begin{tikzpicture}[scale=0.75, x=1cm, y=0.5cm, every node/.style={transform shape}]
\tikzstyle{empty}=[draw, circle, minimum size=5pt, inner sep=0mm]
\tikzstyle{nero} =[draw, circle, minimum size=4pt, inner sep=0mm, fill]

\node at (-0.8,-0.5) {$A$};
\draw (-0.5,0.5)--(4.5,0.5);
\node at (0,0) [nero] {};  \node[yshift=15pt] at (0,0) {$\scriptstyle 0$};
\node at (1,0) [empty] {}; \node[yshift=15pt] at (1,0) {$\scriptstyle 1$};
\node at (2,0) [nero] {}; \node[yshift=15pt] at (2,0) {$\scriptstyle 2$};
\node at (3,0) [nero] {}; \node[yshift=15pt] at (3,0) {$\scriptstyle 3$};
\node at (4,0) [nero] {}; \node[yshift=15pt] at (4,0) {$\scriptstyle 4$};
\node at (0,-1) [empty] {};
\node at (1,-1) [empty] {};
\node at (2,-1) [empty] {};
\node at (3,-1) [empty] {};
\node at (4,-1) [nero] {};
\node at (0,-2) [empty] {};
\node at (1,-2) [empty] {};
\node at (2,-2) [empty] {};
\node at (3,-2) [empty] {};
\node at (4,-2) [empty] {};
\node[left] at (0,0) {$b$};
\node[right] at (2,0) {$b'$};

\end{tikzpicture}
\hspace*{0.5in}
\begin{tikzpicture}[scale=0.75, x=1cm, y=0.5cm, every node/.style={transform shape}]
\tikzstyle{empty}=[draw, circle, minimum size=5pt, inner sep=0mm]
\tikzstyle{nero} =[draw, circle, minimum size=4pt, inner sep=0mm, fill]

\node at (-0.8,-0.5) {$C$};
\draw (-0.5,0.5)--(4.5,0.5);
\node at (0,0) [empty] {};  \node[yshift=15pt] at (0,0) {$\scriptstyle 0$};
\node at (1,0) [empty] {}; \node[yshift=15pt] at (1,0) {$\scriptstyle 1$};
\node at (2,0) [nero] {}; \node[yshift=15pt] at (2,0) {$\scriptstyle 2$};
\node at (3,0) [nero] {}; \node[yshift=15pt] at (3,0) {$\scriptstyle 3$};
\node at (4,0) [nero] {}; \node[yshift=15pt] at (4,0) {$\scriptstyle 4$};
\node at (0,-1) [empty] {};
\node at (1,-1) [empty] {};
\node at (2,-1) [nero] {};
\node at (3,-1) [empty] {};
\node at (4,-1) [nero] {};
\node at (0,-2) [empty] {};
\node at (1,-2) [empty] {};
\node at (2,-2) [empty] {};
\node at (3,-2) [empty] {};
\node at (4,-2) [empty] {};

\node[right] at (2,0) {$b$};
\node[right] at (2,-1) {$b'$};
\node[right] at (4,-1) {$d$};

\end{tikzpicture}

\end{center}
Moving bead $b'$ or bead $d$ in $C$ gives an even partition. 
The corresponding elements of $\mathcal{E}(\gamma)$, obtained by 
moving bead $b$ in $C$ back to position $0$ 
are
$(8,6,2,2)$ and $(10,4,2,2)$, respectively. The other partitions
in $\mathcal{E}(\gamma)$ are found by moving $b$ first; they
are $(6,6,2,2,2)$ and $(10,2,2,2,2)$. Thus the unique summand
of $H^{(2^{9})}$ in $B(\gamma, 2)$ is $P^{(10,4,2,2)}$.
\end{example}

\subsubsection*{Case $w(\gamma) = 0$}
Since $H^{(2^{n-p})}$ has $S^\gamma$ as its unique summand in the block $B(\gamma,0)$,
it follows from Proposition~\ref{prop:Green} that there is a unique summand of $H^{(2^n)}$ 
in $B(\gamma,2)$
with vertex $Q_1$.
 Since any summand in $B(\gamma,2)$ has a
vertex contained in the $p$-weight $2$ defect group $\langle (1,2,\ldots, p), (p+1,\ldots, 2p) \rangle$
(see \cite[Theorem 6.2.45]{JK}), any other summand in this block must be projective.

Suppose that $P^\nu$ is such a projective summand. By Proposition~\ref{prop:proj}(iii)
$\nu$ is an even partition.  By Lemma~\ref{lemma:delta0}(ii),
$\delta(\nu) = 0$. By Theorem~\ref{teo:Richards} and Proposition~\ref{prop:Cartan},
 the column of the decomposition
matrix labelled by $\nu$ has a non-zero entry in a row labelled by partition 
$\mu$ with $\delta(\mu) = 1$. By another application of Lemma~\ref{lemma:delta0}(ii),
this partition $\mu$ is not even, contradicting Proposition~\ref{prop:proj}(iii).
This completes the proof.
$\qed$
\vspace{.2cm}

As a corollary, we are now ready to deduce Theorem \ref{teo:maximalvertex}.

\begin{proof}[Proof of Theorem~\ref{teo:maximalvertex}]

Let $t = \lfloor n /p \rfloor$ and let $r = n-tp$. Since $Q_t$ permutes $2tp$
points, if a block $B(\gamma,w)$ of $S_{2n}$ contains a summand of $H^{(2^n)}$
with vertex $Q_t$ then $wp \ge 2tp$, and so $w = 2t$.
Let $\gamma$ be a $p$-core such that $|\gamma| + 2tp = 2n$.
By Proposition~\ref{prop:Green}, the number of indecomposable summands of $H^{(2^n)}$
with vertex $Q_t$ in $B(\gamma,2t)$ is equal to the
number of indecomposable projective summands of 
$H^{(2^r)}$ in the block $B(\gamma,w-2t)$ of $S_{2r}$. Since $r < p$,
it follows from Theorem~\ref{teo:lowweight} that $H^{(2^r)}$ has at most
one summand in $B(\gamma,w-2t)$, and that every such summand is projective.
Hence the number of indecomposable summands of $H^{(2^n)}$ with 
vertex $Q_t$ is equal to the number of blocks of $H^{(2^r)}$ containing an even 
partition. This equals the number of $p$-core partitions that
can be obtained by removing $p$-hooks from even partitions of $2r$,
as required.
\end{proof}

\section{Proof of Theorem~\ref{teo:Scott}}

Let $2n = 2k+2p$. By hypothesis $0 \le 2k < p$. The $p$-core $(2k)$ 
is represented by the abacus shown below. (If $k=0$ then the bead in the
second row should be deleted.)

\smallskip
\begin{center}
\begin{tikzpicture}[scale=1, x=0.5cm, y=0.5cm, every node/.style={transform shape}]
\tikzstyle{empty}=[draw, circle, minimum size=5pt, inner sep=0mm]
\tikzstyle{nero} =[draw, circle, minimum size=4pt, inner sep=0mm, fill]
\def\x{4.5}\def\xx{2.5}
\def\y{10.5}\def\yy{8.7}
\draw (-0.5,0.5)--(12,0.5);
\node at (0,0) [nero] {};
\node at (1,0) [nero] {};
\node at (\x,0) [nero] {};
\node at (\x+1,0) [nero] {};
\node at (\x+2,0) [nero] {};
\node at (\y,0) [nero] {};
\node at (\y+1,0) [nero] {};

\node at (0,-1) [empty] {};
\node at (1,-1) [empty] {};
\node at (\x,-1) [empty] {};
\node at (\x+1,-1) [nero] {};
\node at (\x+2,-1) [empty] {};
\node at (\y,-1) [empty] {};
\node at (\y+1,-1) [empty] {};

 \node[yshift=15pt] at (0,0) {$\scriptstyle 0$};
\node[yshift=15pt] at (1,0) {$\scriptstyle 1$};
\node[yshift=15pt] at (\x-0.25,0) {$\scriptstyle 2k-1$};
\node[yshift=15.5pt] at (\x+1,0) {$\scriptstyle 2k$};
\node[yshift=15pt] at (\x+2.25,0) {$\scriptstyle 2k+1$};
\node[yshift=15pt] at (\y,0) {$\scriptstyle p-2$};
\node[yshift=15pt] at (\y+1.5,0) {$\scriptstyle p-1$};

\node at (\xx,-0.5) {$\cdots$};
\node at (\yy,-0.5) {$\cdots$};
\node[yshift=15pt] at (\xx,0) {$\cdots$};
\node[yshift=15pt] at (\yy,0) {$\cdots$};

\end{tikzpicture}
\end{center}
Let $\ob{u}$ denote the partition obtained by
two single-step moves of the lowest bead on runner $u$. Let $\tb{u}{u}$
denote the partition obtained by
a single-step move of the lowest bead on runner $u$ followed by a single-step move
of the bead immediately above it. Finally let $\tb{u}{v}$ denote the partition obtained by
single-step moves of the
lowest beads on runners $u$ and $v$. 

It follows from Proposition~\ref{prop:Delta0},
but can also easily be seen directly, that if $k \not= 0$ then
the even partitions in $B\bigl( (2k),2 \bigr)$ are $\ob{2k}$, $\ob{p-1}$ and
$\tb{j}{j+1}$ where  \emph{either} $j < 2k$ and $j$ is even, 
\emph{or} $j > 2k$ and $j$ is odd. 
If $k=0$ then they are $\ob{p-1}$ and $\tb{j}{j+1}$ for $j$ even.
A convenient way to find the composition factors of the corresponding
Specht modules uses the chains in the following lemma.

\begin{lemma}\label{lemma:chains}
Let $\mathcal{P}$ be the set consisting of all even partitions
in $B\bigl( (2k),2\bigr)$ 
together with all partitions $\nu \in B\bigl( (2k), 2\bigr)$ such that $\delta(\nu) = 1$.
Then $\mathcal{P}$ is totally ordered by the dominance order.
The elements of $\mathcal{P}$ are as follows. If $2 < 2k < p-1$ then
\begin{align*}
{}&{} \obb{2k} \rhd \ob{p-1} \rhd \tb{2k}{p\m2} \rhd \tbb{p-2}{p-1} \rhd \cdots \\
& \cdots \rhd \tbb{2t\p1}{2t\p2} \rhd \tb{2t}{2t\p2} \rhd \tb{2t\m1}{2t\p1} \rhd \tbb{2t\m1}{2t} \rhd \cdots
\\
& \cdots \rhd \tbb{2k\p1}{2k\p2} \rhd \tb{2k\m1}{2k\p2} \rhd \tb{2k\m2}{2k\p1} \rhd
\tbb{2k\m2}{2k\m1} \rhd \cdots \\ 
& \cdots \rhd \tbb{2s}{2s\p1} \rhd \tb{2s\m1}{2s\p1} \rhd \tb{2s\m2}{2s} \rhd \tbb{2s\m2}{2s\m1} \rhd \cdots
\\
& \hspace*{2.1in} 
\cdots \rhd \tbb{2}{3} \rhd \tb{1}{3} \rhd \tb{0}{2} \rhd \tbb{0}{1} \rhd \tb{1}{1}; 
\intertext{if $2k = 0$ then}
&\obb{p-1} \rhd \ob{p-2} \rhd \tb{p-3}{p-1} \rhd \tbb{p-3}{p-2} \rhd \tb{p-4}{p-2} \rhd \cdots \\
& \hspace*{1.575in} \cdots \rhd 
\tb{2}{4} \rhd \tbb{2}{3} \rhd \tb{1}{3} \rhd \tb{0}{2} \rhd \tbb{0}{1} \rhd \tb{1}{1}
\intertext{with the general case as above; if $2k=2$ then}
&\obb{2} \rhd \ob{p-1} \rhd \tb{2}{p-2} \rhd \tbb{p-2}{p-1} \rhd \tb{p-3}{p-1} \rhd \cdots \\
& \hspace*{1.575in} \cdots \rhd 
\tb{3}{5} \rhd \tbb{3}{4} \rhd \tb{1}{4} \rhd \tb{0}{3} \rhd \tbb{0}{1} \rhd \tb{1}{1} 
\intertext{with the general case as above; and if $2k=p-1$ then}
&\obb{p-1} \rhd \tb{p-2}{p-1} \rhd \ob{p-3} \rhd \tbb{p-3}{p-2} \rhd \tb{p-4}{p-2} \rhd \cdots \\
& \hspace*{1.575in} \cdots \rhd \tb{2}{4} 
\rhd \tbb{2}{3} \rhd \tb{1}{3} \rhd \tb{0}{2} \rhd \tbb{0}{1} \rhd \tb{1}{1} \end{align*}
with the general case as above. The even partitions are shown in bold type.
\end{lemma}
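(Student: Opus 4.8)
The plan is to enumerate $\mathcal{P}$ explicitly and then verify the displayed chain by a sequence of elementary dominance comparisons on the abacus. First I would identify the two families comprising $\mathcal{P}$. The even partitions in $B((2k),2)$ are already listed in the paragraph preceding the lemma (equivalently, they may be read off Proposition~\ref{prop:Delta0}): namely $\ob{2k}$, $\ob{p-1}$, and the $\tb{j}{j+1}$ with $j<2k$ even or $j>2k$ odd. For the partitions with $\delta(\nu)=1$ I would rerun the abacus analysis of Lemma~\ref{lemma:delta0} and Proposition~\ref{prop:Delta0}: every $\nu\in B((2k),2)$ arises from the core abacus by either two single-step moves of one bead or one single-step move of each of two beads, and the leg-lengths of the two removable rim $p$-hooks equal the bead-counts in the relevant windows of $p-1$ positions (the observation recorded before Lemma~\ref{lemma:delta0}). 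Computing $\delta(\nu)$ as the absolute difference of these two counts, and recording which moves give $\delta(\nu)=1$, shows that the $\delta=1$ partitions are exactly the remaining (non-bold) symbols in the statement: the $\tb{u}{u}$, the gap-two pairs $\tb{u}{u+2}$, single-bead symbols such as $\ob{p-2}$ in the $2k=0$ regime, and the terminal $\tb{1}{1}$.

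For the ordering I would use the standard abacus criterion: fixing a large number of beads, $\lambda\unrhd\mu$ if and only if, listing bead positions in decreasing order, every partial sum for $\lambda$ is at least that for $\mu$ — this is just majorization of beta-numbers, since $\sum_{i\le j}\beta_i$ and $\sum_{i\le j}\lambda_i$ differ by a constant depending only on $j$. The key simplification is that all partitions in $\mathcal{P}$ share the same beads except the (at most two) beads that were moved, and majorization is preserved when a common multiset of beads is adjoined to both sides. Hence each comparison of consecutive chain entries collapses to a one-line majorization check on the four-element symmetric difference of their moved and vacated positions. I would carry out the handful of genuinely distinct local comparisons: the two even tops $\ob{2k}\rhd\ob{p-1}$; the drop $\ob{p-1}\rhd\tb{2k}{p-2}$; the repeating interior block $\tbb{2t+1}{2t+2}\rhd\tb{2t}{2t+2}\rhd\tb{2t-1}{2t+1}\rhd\tbb{2t-1}{2t}$; the crossing at runner $2k$; and the tail $\tb{0}{2}\rhd\tbb{0}{1}\rhd\tb{1}{1}$. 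Strictness of each $\rhd$ simultaneously yields distinctness of all the listed partitions.

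Finally I would assemble these comparisons into the full descending sequence and treat the four regimes $2<2k<p-1$, $2k=0$, $2k=2$ and $2k=p-1$ separately, noting that the interior steps are identical and only the top and bottom few entries change, since there runner $2k$ coincides with $0$, $2$ or $p-1$, or the core bead is absent or abuts the end of the abacus. As the listed partitions exhaust $\mathcal{P}$ and form a strictly descending sequence under $\unrhd$, they are totally ordered, as claimed. I expect the main obstacle to be the classification-and-completeness step for the $\delta=1$ partitions, together with keeping the endpoint casework consistent across the four regimes; the dominance comparisons themselves are routine once reduced to symmetric differences.
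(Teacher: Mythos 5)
Your proposal reaches the right conclusion but handles the ordering differently from the paper. The paper, like you, treats the identification of the $\delta=1$ partitions as a routine abacus check; for the dominance comparisons, however, it invokes Lemma~4.4 of Richards \cite{Richards} (that $\tb{u}{v}\unlhd\tb{u'}{v'}$ if and only if $u\le u'$ and $v\le v'$) and then checks only the entries of the form $\ob{u}$ by hand. You instead verify every consecutive link directly by majorization of beta-numbers localized to the symmetric difference of moved bead positions. Your route is longer but uniform and self-contained, and it buys you something real: the componentwise criterion has to be read with Richards' own indexing of the runners, and applied naively to the paper's labels it would give $\tb{1}{1}\unrhd\tb{0}{1}$, the reverse of the final link $\tbb{0}{1}\rhd\tb{1}{1}$ in every chain, so the $\tb{u}{u}$ entries (as well as the $\ob{u}$'s) genuinely need the kind of direct check you propose. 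Your localization step is valid in the direction you need (majorization of equal-size, equal-sum complements implies majorization of the full beta-sequences, e.g.\ via the threshold characterization $\sum_i(a_i-t)^+\ge\sum_i(b_i-t)^+$), though note the symmetric difference has eight positions, not four, whenever the two partitions share no moved bead.

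Two corrections to your set-up before you execute it. First, your summary of the $\delta=1$ partitions is incomplete: besides the $\tb{u}{u}$, the $\tb{u}{u+2}$ and the single-runner symbols, the chains contain the gap-three pairs $\tb{2k-1}{2k+2}$ and $\tb{2k-2}{2k+1}$ at the crossing of runner $2k$ (and $\tb{1}{4}$, $\tb{0}{3}$ when $2k=2$); your systematic leg-length computation will find them, but the list as stated does not match the chains you are trying to prove. Second, the two sources you call equivalent for the even partitions are not: the paragraph preceding the lemma asserts that $\ob{p-1}$ is even for every $k\ne 0$, but this holds only when $2k\in\{0,p-1\}$ --- for $0<2k<p-1$ the partition $\ob{p-1}$ (e.g.\ $(13,5)$ when $p=7$ and $2k=4$) is not even and has $\delta=1$, which is why it appears unbolded in the chains. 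Derive the even set from Proposition~\ref{prop:Delta0} (or directly), not from that paragraph, or your boldface assignment will be wrong.
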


\begin{proof}
It is routine to check that the partitions $\nu$ such that $\delta(\nu) = 1$ are as claimed.
Lemma 4.4 in \cite{Richards}, which states that $\tb{u}{v} \unlhd \tb{u'}{v'}$ if and only if
$u \le u'$ and $v \le v'$, then gives the total order of the chains, except
for the cases involving partitions of the form $\ob{u}$, which have to be checked separately.
\end{proof}

\begin{proposition}\label{prop:Scott}
Let $\mu \in B\bigl( (2k), 2 \bigr)$ be even. 
Suppose that, in the relevant chain of partitions in Lemma~\ref{lemma:chains},
we have adjacent partitions $\mu' \rhd \nu' \rhd \nu \rhd \mu \rhd \nu''$ where $\mu'$ is even. Then
\begin{thmlist}
\item $\mu$ is even and ${\mu'}^\circ = \mu$ and ${\nu}^\circ = \nu''$;
\item if $\mu \not= \langle 0,1 \rangle$ then
the composition factors of $S^\mu$ are $D^{\mu'}$, $D^\nu$ and $D^{\mu}$;
\item the composition factors of $S^{\tb{0}{1}}$ are 
$D^{\tb{2}{3}}$ and $D^{\tb{0}{2}}$;
\item $\Ext^1(D^{\mu'}, D^\nu) = \Ext^1(D^\nu,D^{\mu}) = F$;
\item if $\lambda \in B\bigl( (2k), 2\bigr)$ and $\Ext^1(D^\mu, D^\lambda) \not=0$
then $\delta(\lambda) = 1$.
\end{thmlist}
\end{proposition}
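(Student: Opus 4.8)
The plan is to deduce (i)--(v) from Richards' Theorem~\ref{teo:Richards}, the total order of Lemma~\ref{lemma:chains}, the evenness criterion of Proposition~\ref{prop:Delta0}, and the self-duality of the simple modules $D^\lambda$. I would settle (i) first. The identity $\nu^\circ = \nu''$ is immediate from the total order: $\mathcal{P}$ contains \emph{every} partition of $B\bigl((2k),2\bigr)$ with $\delta = 1$, so $\nu^\circ$, being the largest partition $\unlhd \nu$ with $\delta = 1$, is just the next $\delta = 1$ partition below $\nu$ in $\mathcal{P}$; as the only element between $\nu$ and $\nu''$ is the even partition $\mu$, this is $\nu''$. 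For ${\mu'}^\circ = \mu$ the point is that the largest black partition $\unlhd \mu'$ with $\delta = 0$ is again even. Using Proposition~\ref{prop:Delta0} ($\Delta = 0$ and black) together with Richards' abacus description of the operation $\diamond'$, I would check on the abacus of $(2k)$ that $\diamond'$ sends an even partition down to the next even partition in the chain, namely $\mu$; in particular $\mu$ is even, completing (i).

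Next I would obtain (ii) and (iii) by reading off the row of the decomposition matrix indexed by $\mu$. By Theorem~\ref{teo:Richards} a $p$-regular $\rho$ has $d_{\mu\rho}\neq 0$ precisely when $\rho = \mu$, or $\mu = \rho^\circ$, or $\rho^\circ \lhd \mu \lhd \rho$ with $|\delta(\rho)-\delta(\mu)| = 1$. Feeding in ${\mu'}^\circ = \mu$ and $\nu^\circ = \nu''$ from (i), these conditions are met exactly by $\rho \in \{\mu,\mu',\nu\}$: the choice $\rho = \nu$ works since $\delta(\nu)=1$ and $\nu^\circ = \nu'' \lhd \mu \lhd \nu$; the choice $\rho = \mu'$ works since $\mu = {\mu'}^\circ$; and $\rho = \nu'$ is excluded because $(\nu')^\circ = \nu \rhd \mu$ violates $(\nu')^\circ \lhd \mu$. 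Hence the composition factors of $S^\mu$ are $D^{\mu'}$, $D^\nu$, $D^\mu$, each with multiplicity one, proving (ii). For (iii) one checks from its explicit form that $\mu = \tb{0}{1}$ is the partition $(2k,2^p)$, which is $p$-singular; thus $D^{\tb{0}{1}}$ does not exist and only $D^{\tb{2}{3}}$ and $D^{\tb{0}{2}}$ survive.

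I would then prove (iv) assuming (v). Since $D^\mu$ is the top composition factor of $S^\mu$ and $\mu' \rhd \nu \rhd \mu$, the only question is whether $D^{\mu'}$ sits in the second or the third Loewy layer of $S^\mu$; the former would mean $\Ext^1(D^\mu,D^{\mu'})\neq 0$, which is ruled out by (v) because $\mu'$ is even, so $\delta(\mu') = 0$. Thus $S^\mu$ is uniserial with layers $D^\mu$, $D^\nu$, $D^{\mu'}$ from the top, and the two nonsplit extensions it exhibits give $\Ext^1(D^\mu,D^\nu)\neq 0$ and $\Ext^1(D^\nu,D^{\mu'})\neq 0$. Self-duality of the $D^\lambda$ makes $\Ext^1$ symmetric, turning these into $\Ext^1(D^\nu,D^\mu)\neq 0$ and $\Ext^1(D^{\mu'},D^\nu)\neq 0$; both are at most one-dimensional because every decomposition number in the block is $0$ or $1$ by Theorem~\ref{teo:Richards}, whence equality with $F$.

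The crux, and the step I expect to be hardest, is (v): that $\Ext^1(D^\mu,D^\lambda)\neq 0$ forces $\delta(\lambda) = 1$, i.e.\ that the Ext-quiver of $B\bigl((2k),2\bigr)$ is bipartite for the parity of $\delta$. This is a statement about module structure that the decomposition numbers cannot detect directly, and it is precisely what makes the uniseriality in the previous paragraph legitimate. I would derive it from the known structure of $p$-weight two blocks in odd characteristic, where $D^\alpha$ and $D^\beta$ are joined in the Ext-quiver only when $|\delta(\alpha)-\delta(\beta)| = 1$; since $\delta(\mu) = 0$ this gives $\delta(\lambda) = 1$ and, fed back into (iv), simultaneously secures the uniserial structure of $S^\mu$.
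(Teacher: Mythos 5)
Your proposal follows essentially the same route as the paper: part (i) by inspecting the chains of Lemma~\ref{lemma:chains} and the definition of $\circ$, parts (ii) and (iii) by reading off decomposition numbers from Theorem~\ref{teo:Richards}, and parts (iv) and (v) resting on the known $\delta$-bipartite Ext-quiver of $p$-weight two principal blocks (the paper cites Martin's thesis for exactly this). The only deviation is that you derive (iv) from (v) via the uniserial structure of $S^\mu$ and self-duality of the simples rather than reading both off Martin's figures; this is a sound and slightly more self-contained packaging of the same input.
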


\begin{proof}
Part (i) follows from the definition of the map $\circ$ and inspection of the chains in
Lemma~\ref{lemma:chains}. Then
(ii) and (iii) are easy deductions from Lemma~\ref{lemma:chains} and Theorem~\ref{teo:Richards}.
The Ext quivers of $p$-weight two principal blocks
of symmetric groups were found by Martin in~\cite{MartinPhD}: parts (iv) and (v) 
can be read off from Figures 9 and 10 in the appendix. 
\end{proof}

We are now ready to prove Theorem~\ref{teo:Scott}.

\begin{proof}[Proof of Theorem~\ref{teo:Scott}]
By Theorem~\ref{teo:lowweight}(iii), there is a unique indecomposable summand of $H^{(2^{p+k})}$
in the block $B\bigl( (2k), 2\bigr)$ of $S_{2(p+k)}$. This summand,~$U$ say,
is the Scott module of vertex $Q_1$. By \cite[(2.1)]{BroueBrauer}, $U$ is self-dual
and has the trivial module in its socle. By Proposition~\ref{prop:Scott}(ii) and (iii),
each $D^\nu$ labelled by a partition $\nu$ such that $\delta(\nu) = 1$ appears exactly
once in $U$. Hence if $2 < 2k < p-1$ then the heart of $U$ is
\[\begin{split} D^{\tb{2k}{p-2}} {}&{}\oplus D^{\tb{p-4}{p-2}} \oplus \cdots \\
&\cdots \oplus D^{\tb{2k+1}{2k+3}} 
\oplus D^{\tb{2k-2}{2k+1}}
\oplus D^{\tb{2k-4}{2k-2}} \oplus \cdots \oplus D^{\tb{0}{2}}.\end{split} \]
If $2k \in \{0,2,p-1\}$ then an analogous result holds with minor changes to the
labels, as indicated in Figure~5 below.
Similarly, each simple module $D^{2\lambda}$ labelled by a $p$-regular even partition 
$2\lambda$ appears exactly
twice in~$U$. 
By Proposition~\ref{prop:Scott}(iv), the only extensions in $U$
are between modules in the heart of $U$ and these $D^{2\lambda}$. 
It is easily seen, either from the chains in Lemma~\ref{lemma:chains},
or from Lemma~4.3 in \cite{Richards} and our Lemma~\ref{lemma:delta0}, 
that the unique even partition in $B\bigl( (2k), 2\bigr)$
that is not $p$-regular is $(2k,2^p)$ (interpreted as $(2^p)$ if $k=0$).
Hence $U$ has 
three Loewy layers and
\[ \soc U \cong \top U \cong \bigoplus_{2 \lambda \in B( (2k), 2) \atop 2\lambda\not= (2k, 2^p)} D^{2\nu}. \]
If $2 < 2k < p-1$ then the structure of $U$ is as shown in Figure 1.
The exceptional cases are shown in Figure~5 below.
\end{proof}

\begin{figure}[h]
\begin{center}
\makebox[\textwidth]{
\begin{tikzpicture}[scale=1, x=1cm, y=1cm]
\tikzstyle{nero} = [draw, circle, minimum size=4pt, inner sep=0mm, fill]

\renewcommand{\scriptstyle}[1]{\scalebox{0.9}{$#1$}}
\def\x{6.3}\def\v{0}\def\ww{1.5}
\newcommand{\s}[1]{\scalebox{0.9}{#1}}

\foreach \w in {0,\x} {
	\foreach \z in {0,-\ww-\ww} {
		\draw (0+\w,\z)--(2+\w,\z-2);
		\draw (0+\w,\z-2)--(2+\w,\z);
	}
}

\foreach \z in {0,0-\ww-\ww} {
	\draw (2,\z)--(3.5,\z-1.5);
	\draw (2,\z-2)--(3.5,\z-0.5);
	\draw (\x,\z)--(\x-1.5,\z-1.5);
	\draw (\x,\z-2)--(\x-1.5,\z-0.5);
	\draw (\x+2,\z)--(\x+3,\z-1);
	\draw (\x+2,\z-2)--(\x+3,\z-1);
}

\foreach \z in {0,-2} {
	\node at (0,\z) [fill=white] {\s{$D^{\ob{p-1}}$}};
	\node at (2,\z) [fill=white] {\s{$D^{\tb{p-3}{p-2}}$}};
	\node at (\x,\z) [fill=white] {\s{$D^{\tb{4}{5}}$}};
	\node at (\x+2,\z) [fill=white] {\s{$D^{\tb{2}{3}}$}};
	\node at (4.25,\z) {$\cdots$};
}


\foreach \z in {0-\ww-\ww,-2-\ww-\ww} {
	\node at (0,\z) [fill=white] {\s{$D^{\ob{2}}$}};
	\node at (2,\z) [fill=white] {\s{$D^{\tb{p-2}{p-1}}$}};
	\node at (\x,\z) [fill=white] {\s{$D^{\tb{5}{6}}$}};
	\node at (\x+2,\z) [fill=white] {\s{$D^{\tb{3}{4}}$}};
	\node at (4.25,\z) {$\cdots$};
}

\node at (1,-1) [fill=white] {\s{$D^{\tb{p-3}{p-1}}$}};
\node at (3,-1) [fill=white] {\s{$D^{\tb{p-5}{p-3}}$}};
\node at (\x-1,-1) [fill=white] {\s{$D^{\tb{4}{6}}$}};
\node at (\x+1,-1) [fill=white] {\s{$D^{\tb{2}{4}}$}};
\node at (\x+3,-1) [fill=white] {\s{$D^{\tb{0}{2}}$}};


\node at (1,-1-\ww-\ww) [fill=white] {\s{$D^{\tb{2}{p-2}}$}};
\node at (3,-1-\ww-\ww) [fill=white] {\s{$D^{\tb{p-4}{p-2}}$}};
\node at (\x-1,-1-\ww-\ww) [fill=white] {\s{$D^{\tb{5}{7}}$}};
\node at (\x+1,-1-\ww-\ww) [fill=white] {\s{$D^{\tb{3}{5}}$}};
\node at (\x+3,-1-\ww-\ww) [fill=white] {\s{$D^{\tb{0}{3}}$}};

\end{tikzpicture}}
\end{center}

\caption{\small Loewy layers of the Scott module summand $U$ of $H^{(2n)}$ 
in the block $B\bigl( (2k), 2\bigr)$ in the cases $2k=0$ (top) and $2k=2$ (bottom).
If $2k = p-1$ then the diagram for $2k=0$ applies, replacing $\tb{p-3}{p-1}$
with~$\ob{p-3}$.}
\end{figure}

\begin{remark}\label{remark:Paget}
In \cite{PagetFiltration}, Paget proved that $H^{(2^n)}$ has a  filtration
$0 = V_0 \subset V_1 \subset \ldots \subset V_d = H^{(2^n)}$ such that
$V_i/V_{i-1} \cong S^{2\lambda(i)}$, where
$\lambda(1) < \ldots < \lambda(d)$ are the partitions of $n$. By Proposition 13 in \cite{WildonMFs}, which
was proved using the Hemmer--Nakano homological characterization of modules with a Specht filtration
(see \cite{HemmerNakano}), it follows that the summand $U$ has a Specht filtration, provided $p \ge 5$.
This filtration can be seen in Figures~1 and~5: for $2k > 2$, the
bottom Specht factor is $S^{\tb{0}{1}}$  (with composition factors $D^{\tb{0}{2}}$ 
and $D^{\tb{2}{3}}$) the next is $S^{\tb{2}{3}}$ (with composition factors $D^{\tb{2}{3}}$,
$D^{\tb{2}{4}}$, $D^{\tb{4}{5}}$), and on on, ending with the trivial module $D^{\ob{2k}}$
at the top.
\end{remark}

It would be interesting to know other cases where a single indecomposable module exhibits a significant
proportion of the extensions between two classes of simple modules for the symmetric group.

\section{Example: $H^{(2^7)}$ in characteristic $5$}

Let $p=5$. The even partitions of $14$ lie in the $5$-blocks $B\bigl( (4), 2 \bigr)$,
$B\bigl( (2,2), 2 \bigr)$, $B\bigl( (3,1), 2 \bigr)$, $B\bigl( (1^4), 2 \bigr)$,
$B\bigl( (5,2,2), 1 \bigr)$ and $B\bigl( (4,4,2,2,2), 0 \bigr)$ of $S_{14}$.
The principal block summand, $U$ say, is dealt with by Theorem~\ref{teo:Scott}.
Since $w\bigl( (3,1)\bigr) = w\bigl( (1^4) \bigr) = 2$, $w\bigl( (5,2,2) \bigr) = 1$ and 
$w\bigl( (4,4,2,2,2) \bigr) = 0$, it follows
from Theorem~\ref{teo:lowweight} that the summands
in these blocks are the projective modules $P^{(10,4)}$, $P^{(8,2,2,2)}$, $P^{(10,2,2)}$
and $S^{(4,4,2,2,2)}$,
respectively.
By Theorem~\ref{teo:lowweight}(iii), there is a unique summand, $V$ say, 
in $B\bigl( (2,2), 2\bigr)$.
This summand has vertex~$Q_1$. By Remark~\ref{remark:Paget}, $V$
has a Specht filtration by the Specht factors in this block, namely $S^{(12,2)}$, $S^{(6,4,4)}$
and $S^{(2^7)}$. 
Using Theorem~\ref{teo:Richards} to get the required decomposition numbers,
one finds that
\newcommand{\Sb}[1]{\scalebox{1}{$\displaystyle #1$}}
\newcommand{\Sbb}[1]{\scalebox{0.9}{$\displaystyle #1$}}
\begin{align*}\allowdisplaybreaks[4]
 U{}&{}  \oplus V \oplus P^{(10,4)} \oplus P^{(8,2,2,2)} 
\\
&=
\Sb{\left( \begin{matrix} S^{(14)} \\ S^{(4,4,4,2)} \\ S^{(4,2^5)} \end{matrix} \right)}
\bigoplus
\Sb{\left( \begin{matrix} S^{(12,2)} \\ S^{(6,4,4)} \\ S^{(2^7)} \end{matrix} \right)}
\bigoplus
\Sb{\left( \begin{matrix} S^{(10,4)} \\ S^{(8,6)} \\ S^{(8,4,2)} \end{matrix} \right)} 
\bigoplus
\Sb{\left( \begin{matrix} S^{(8,2,2,2)} \\ S^{(6,4,2,2)} \\ S^{(6,2^4)} \end{matrix} \right)} \\
&= 
\Sbb{\left( \begin{matrix} D^{(14)}  \oplus D^{(4,4,4,2)} \\ D^{(7,5,1,1)} \oplus D^{(4,3,2,2,2,1)}
\\ D^{(14)} \oplus D^{(4,4,4,2)} \end{matrix} \right)}
\bigoplus
\Sbb{\left( \begin{matrix} D^{(12,2)}  \oplus D^{(6,4,4)} \\ D^{(7,4,3)} \oplus D^{(3,3,3,2,1,1,1)}
\\ D^{(12,2)} \oplus D^{(6,4,4)} \end{matrix} \right)} 
\\&\hspace*{0.55in}
\bigoplus
\Sbb{\left( \begin{matrix} D^{(10,4)} \\ D^{(13,1)} \oplus D^{(8,6)} \\ D^{(10,4)} \oplus D^{(8,4,2)} \\
D^{(13,1)} \oplus D^{(8,6)} \\ D^{(10,4)} \end{matrix} \right)}
\bigoplus
\Sbb{\left( \begin{matrix} D^{(8,2,2,2)} \\ D^{(9,2,2,1)} \oplus D^{(6,4,2,2)} \\ 
D^{(6,5,2,1)} \oplus
D^{(8,2,2,2)} \oplus D^{(6,2,2,2,2)} \\ D^{(9,2,2,1)} \oplus  D^{(6,4,2,2)} \\ D^{(8,2,2,2)} \end{matrix}
\right) }.
\end{align*}
The Loewy layers of $P^{(10,2,2)}$ are given by Remark 6.1, while the other summand, $S^{(4,4,2,2,2)}$
is simple. By Proposition~6.5 in \cite{GiannelliWildonDec}, all these summands
have abelian endomorphism rings. We note that $P^{(10,4)}$ is the projective
summand used in the proof of Theorem~\ref{teo:vertices}.

%
\def\cprime{$'$} \def\Dbar{\leavevmode\lower.6ex\hbox to 0pt{\hskip-.23ex
  \accent"16\hss}D} \def\cprime{$'$}
\providecommand{\bysame}{\leavevmode\hbox to3em{\hrulefill}\thinspace}

\providecommand{\href}[2]{#2}

\end{document}